\newcommand{\ju}[1]{[\![{#1}]\!]}
\newcommand{\jut}[1]{[\![{#1}]\!]_{t}}
\newcommand{\vct}[1]{\mathbf{#1}}
\newcommand{\cC}{\mathcal{C}}
\newcommand{\curl}{\mathrm{curl}}
\newcommand{\cT}{\mathcal{T}}
\newcommand{\dhx}{\mathrm{d}{\vhx}}
\newcommand{\dhy}{\mathrm{d}{\vhy}}
\newcommand{\dhz}{\mathrm{d}{\vhz}}
\newcommand{\dx}{\mathrm{d}{\vx}}
\newcommand{\dvg}{\mathrm{div}}
\newcommand{\Fhin}{\mathcal{F}_{h}^{I}}
\newcommand{\Fnu}{\mathcal{F}_{\nu}}
\newcommand{\jac}{J}
\newcommand{\kk}{k}
\newcommand{\Nd}{R}
\newcommand{\Qh}{\mathcal{Q}_h}
\newcommand{\RT}{D}
\newcommand{\talpha}{\tilde{\alpha}}
\newcommand{\Th}{\mathcal{T}_h}
\newcommand{\tlambda}{\tilde{\lambda}}
\newcommand{\Tnu}{\mathcal{T}_\nu}
\newcommand{\tphi}{\tilde{\phi}}
\newcommand{\vG}{\vct{G}}
\newcommand{\vH}{\vct{H}}
\newcommand{\Vh}{\mathcal{V}_h}
\newcommand{\vHdel}{\vct{H}^{\Delta}}
\newcommand{\vhG}{\hat{\vG}}
\newcommand{\vhHdel}{\hat{\vH}^{\Delta}}
\newcommand{\vhr}{\hat{\vct{r}}}
\newcommand{\vhR}{\hat{\vct{R}}}
\newcommand{\vhx}{\hat{\vct{x}}}
\newcommand{\vhy}{\hat{\vct{y}}}
\newcommand{\vhz}{\hat{\vct{z}}}
\newcommand{\vj}{\vct{j}}
\newcommand{\vjj}{\boldsymbol{\j}}
\newcommand{\vjdel}{\vct{j}^{\Delta}}
\newcommand{\vn}{\hat{\vct{n}}}
\newcommand{\vr}{\vct{r}}
\newcommand{\vT}{\vct{T}}
\newcommand{\vtH}{\tilde{\vH}}
\newcommand{\vtHdel}{\vtH^{\Delta}}
\newcommand{\vthHdel}{\hat{\tilde{\vH}}^{\Delta}}
\newcommand{\vthjdel}{\hat{\tilde{\vjj}}^{\Delta}}
\newcommand{\vu}{\vct{u}}
\newcommand{\vv}{\vct{v}}
\newcommand{\vw}{\vct{w}}
\newcommand{\vx}{\vct{x}}
\newcommand{\vzero}{\vct{0}}
\newtheorem{thm}{Theorem}[section]
\newtheorem{lem}[thm]{Lemma}
\newtheorem{prp}[thm]{Proposition}
\newtheorem{rem}[thm]{Remark}
\definecolor{ilariablue}{rgb}{0 0.4 0.85}
\definecolor{ilariared}{rgb}{0.85 0.4 0}
\definecolor{sjoerdgreen}{rgb}{0 0.7 0.2}
\definecolor{sjoerdblue}{rgb}{0 0 0.7}
\title[An a posteriori error estimator for N\'ed\'elec
elements]{A polynomial-degree-robust a posteriori error estimator for N\'ed\'elec discretizations of magnetostatic problems}
\author{Joscha Gedicke$^{1*}$, Sjoerd Geevers$^{2*}$, Ilaria
  Perugia$^{2*}$, Joachim Sch\"oberl$^{3*}$}
\address{{\small
$^1$ Institute for Numerical Simulation, University of Bonn \\
Endenicher Allee 19b, 53115 Bonn, Germany \\
$^2$ Faculty of Mathematics, University of Vienna\\
Oskar-Morgenstern-Platz 1, 1090 Vienna, Austria \\
$^3$ Institute for Analysis and Scientific Computing, Vienna
University of Technology\\
Wiedner Hauptstrasse 8-10, 1040 Vienna, Austria
}}
\thanks{*S. Geevers, I. Perugia, and J. Sch\"oberl have been funded by the
Austrian Science Fund (FWF)
through the project F~65 ``Taming Complexity in Partial Differential
Systems''. I. Perugia has also been funded by the FWF through the
project P~29197-N32. }
\begin{document}

\maketitle

\begin{abstract}
We present an equilibration-based \emph{a posteriori} error estimator for N\'ed\'elec element discretizations of the magnetostatic problem.
The estimator
is obtained by adding a gradient correction to the estimator for N\'ed\'elec elements of arbitrary degree presented in~\cite{gedicke20}. 
This new estimator is proven to be reliable, with reliability constant 1, and efficient, with an efficiency constant that is independent of the polynomial degree of the approximation.
These properties are demonstrated in a series of numerical experiments on three-dimensional test problems.
\end{abstract}
\medskip

{\footnotesize
\noindent
{\bf Keywords} {{\em A posteriori} error analysis, high-order N\'ed\'elec
  elements, magnetostatic problem, equilibration principle}\\[0.1cm]
\noindent
{\bf Mathematics Subject Classification } {65N15, 65N30, 65N50}}
\medskip

\section{Introduction}
Magnetostatic equations of the form $\nabla\times(\mu^{-1}\nabla\times\vu) = \vj$ are often approximated using N\'ed\'elec elements. 
To control the error of the N\'ed\'elec finite element approximation,
a wide variety of \textit{a posteriori} error estimators are
available, including residual-type error estimators \cite{monk98,beck00},
hierarchical error estimators \cite{beck99}, Zienkiewicz--Zhu-type
error estimators \cite{nicaise05}, equilibration-based error
estimators \cite{braess08,tang13,creuse17,creuse19b}, and functional
estimates \cite{neittaanmaki10}. Of particular interest are the
localised equilibration-based error estimators, since \emph{(i)} they provide
an explicit upper bound on the error without any unknown constant
involved \cite{braess08}, \emph{(ii)} they are efficient with an efficiency
constant that is typically independent of the polynomial degree
\cite{braess09}, and \emph{(iii)} they only require solving small local problems. For an overview of these type of error estimators, see, for example, \cite{ern15} and the references therein.

The first localised equilibration-based error estimator for the magnetostatic problem
was introduced in \cite{braess08}. That estimator was designed for
N\'ed\'elec element approximations of lowest order only, and requires the solution of local problems on vertex patches. In \cite{gedicke20}, an alternative localised equilibration-based error estimator was presented that is applicable to N\'ed\'elec element approximations of arbitrary degree. That
method requires the solution of local problems on single elements, on single faces, and on small sets of nodes.
While it was proven in \cite{gedicke20} that the estimator satisfies bounds of the form 
\begin{equation*}
C_{\text{eff}}\cdot\text{estimator} \leq \text{error} \leq C_{\text{rel}}\cdot\text{estimator}
\end{equation*}
up to some higher-order data oscillation terms, with reliability
constant $C_{\text{rel}}=1$ and efficiency constant $C_{\text{eff}}>0$
independent of the mesh size, numerical experiments
showed that the efficiency constant $C_{\text{eff}}$ still mildly depends on the polynomial degree.
Recently, a localised \emph{quasi-equilibrated} error estimator was also introduced in \cite{chaumont_frelet20arXiv} that has an efficiency constant $C_{\text{eff}}$ independent of the polynomial degree, but a reliability constant $C_{\text{rel}}$ that is not explicitly known for non-convex domains.

In this paper, a new error estimator is constructed by adding a
gradient correction to the estimator of~\cite{gedicke20}, resulting in
an efficiency index that is now also independent of the polynomial
degree. The proof of reliability (Theorem~\ref{thm:reliability}) is a slight modification of the
corresponding one developed in~\cite{gedicke20}, whereas the proof of
efficiency with a constant independent of the polynomial degree
(Theorem~\ref{thm:efficiency}) is
significantly more involved. Unlike in~\cite{gedicke20}, we can no
longer rely on the efficiency of the residual error estimator, since
this error estimator is not polynomial-degree robust. Instead, the
efficiency proof is based on a new decomposition of the error and
relies on the stability property of the regularized Poincar\'e
integral operator proven in~\cite{costabel10}, and on the stable
broken $H^1$ polynomial extensions presented in \cite{ern19}.
The new estimator and its analysis are presented for the case of 
 piecewise constant magnetic permeability.
The extension to the case of piecewise smooth magnetic permeability
is discussed in Remark~\ref{rem:variablemu}.

The outline of this paper is as follows. In Section \ref{sec:intro},
the considered model problem and its
N\'ed\'elec finite element discretization
is presented. In Section \ref{sec:errEst}, the new error estimator is introduced, and the main theorems on reliability and efficiency are stated. In Section \ref{sec:analysis}, the efficiency of the estimator is proven. Numerical examples are presented in Section \ref{sec:numerics}, and the main results are summarised in Section~\ref{sec:conclusion}.

\section{Model problem and notation}\label{sec:intro}
In this section, we define the same model problem and notation as in~\cite{gedicke20}.
We consider the linear magnetostatic problem in the unknown magnetic field $\vH$:
\begin{align*}
\nabla\times\vH &= \vj &&\text{in }\Omega, \\
\nabla\cdot\mu\vH &= 0 &&\text{in }\Omega, \\
\vn\cdot\mu\vH &= 0 &&\text{on }\partial\Omega,
\end{align*}
where $\Omega\subset\mathbb{R}^3$ is an open, bounded, 
simply connected, polyhedral domain with a connected Lipschitz
boundary $\partial\Omega$
with outward pointing unit normal vector $\vn$, $\mu$ is a scalar magnetic permeability, $\vj$
a given divergence-free current density, and
$\nabla$, $\nabla\times$ and $\nabla\cdot$ denote the gradient, the curl, and the divergence
operator, respectively. We assume that
$\mu:\Omega\rightarrow\mathbb{R}^+$, and
$\mu_0\leq\mu\leq\mu_1$, for some positive constants $\mu_0$ and $\mu_1$.

In terms of a vector potential $\vu$ such that
$\vH=\mu^{-1}\nabla\times\vu$, the problem can be rewritten as the
following system:
\begin{subequations}
\label{eq:curlcurl}
\begin{align}
\nabla\times(\mu^{-1}\nabla\times \vu) &= \vj &&\text{in }\Omega, \\ 
\nabla\cdot\vu &= 0 &&\text{in }\Omega, \\
\vn\times\vu &= \vzero &&\text{on }\partial\Omega,
\end{align}
\end{subequations}
where the uniqueness of $\vu$ is imposed by the second equation (Coulomb's gauge).

Let $D\in\mathbb{R}^3$ be any given domain. We denote by
$L^2(D)^m$ the standard space of square-integrable
functions $\vu:D\rightarrow\mathbb{R}^m$ endowed with norm
$\|\vu\|_D^2:=\int_D \vu\cdot\vu \;\dx$ and inner product
$(\vu,\vw)_D=\int_D \vu\cdot\vw\;\dx$.
We also define the following functional spaces:
\begin{align*}
H^1(D) &:= \{\phi\in L^2(D) \;|\; \nabla \phi\in L^2(D)^3 \}, \\
{H_{0}^1(D)} &:= \{\phi\in H^1(D) \;|\; \phi=0 \text{ on }\partial\Omega \}, \\
H(\curl; D) &:= \{\vu\in L^2(D)^3 \;|\; \nabla\times\vu\in L^2(\Omega)^3 \}, \\
{H_{0}(\curl; D)} &:= \{\vu\in H(\curl;D) \;|\; \vn\times\vu=\vzero \text{ on }\partial\Omega \}, \\
H(\dvg; D) &:=  \{\vu\in L^2(D)^3 \;|\; \nabla\cdot\vu\in L^2(\Omega) \}, \\
H(\dvg^0; D) &:=  \{\vu\in L^2(D)^3 \;|\; \nabla\cdot\vu \equiv 0 \},\\
H_{0,\Gamma}^1(D) &:= \{\phi\in H^1(D) \;|\; \phi=0 \text{ on }\Gamma \},
\end{align*}
where, in the last definition, $\Gamma$ is any two-dimensional manifold
$\Gamma\subset\partial D$.
If $\cT_D$ is any tessellation of $D$, we set
\begin{align*}
H^1(\cT_D) &:= \{\phi\in L^2(D) \;|\; \phi|_T\in H^1(T) \text{ for all }T\in\cT_D\}, \\
H(\curl;\cT_D) &:= \{\vu\in L^2(D) \;|\; \vu|_T\in H(\curl;T) \text{ for all }T\in\cT_D\}, \\
H(\dvg;\cT_D) &:= \{\vu\in L^2(D) \;|\; \vu|_T\in H(\dvg;T) \text{ for all }T\in\cT_D\}.
\end{align*}

The variational formulation of problem \eqref{eq:curlcurl} reads as
follows: Find the vector potential $\vu\in H_0(\curl;\Omega)\cap H(\dvg^0;\Omega)$ such that
\begin{align}
\label{eq:WF}
(\mu^{-1}\nabla\times \vu,\nabla\times\vw)_{\Omega} &= (\vj,\vw)_{\Omega} &&\forall \vw\in H_0(\curl;\Omega).
\end{align}

Before introducing a finite element approximation of~\eqref{eq:WF}, we
define the following polynomial spaces. 
For any $D\subset\mathbb{R}^3$, let $P_k(D)$ denote the space of
polynomials of degree $k$ or less.
Moreover, for any tetrahedron $T$, let $\Nd_k(T)$ and $\RT_k(T)$ denote
the first-kind N\'ed\'elec space and the Raviart-Thomas space,
respectively:
\begin{align*}
\Nd_k(T) &:= \{\vu\in P_k(T)^3 \;|\; \vu(\vx) = \vv(\vx) + \vx\times\vw(\vx) \text{ for some }\vv,\vw\in P_{k-1}(T)^3\}, \\
\RT_k(T) &:= \{\vu\in P_k(T)^3 \;|\; \vu(\vx) = \vv(\vx) + \vx w(\vx) \text{ for some }\vv\in P_{k-1}(T)^3, \;\dots \\
& \qquad\qquad w\in P_{k-1}(T)\}.
\end{align*}
Furthermore,
for any domain $D\subset\mathbb{R}^3$ with a tessellation $\cT_D$,
we define the \emph{discontinuous} spaces
\begin{align*}
P_k^{-1}(\cT_D) &:= \{\phi\in L^2(D) \;|\; \phi|_T \in P_k(T) \text{ for all }T\in\cT_D \}, \\
\Nd_k^{-1}(\cT_D) &:= \{\vu\in L^2(D)^3 \;|\; \vu|_T \in \Nd_k(T) \text{ for all }T\in\cT_D \}, \\
\RT_k^{-1}(\cT_D) &:= \{\vu\in L^2(D)^3 \;|\; \vu|_T \in \RT_k(T) \text{ for all }T\in\cT_D \}, 
\end{align*}
and the \emph{conforming} spaces 
\begin{align*}
P_k(\cT_D)&:=P^{-1}_k(\Th)\cap H^1(D), &  {P_{k,0}(\cT_D)} &:=P^{-1}_k(\cT_D)\cap H^1_{0}(D),\\
\Nd_k(\cT_D)&:=\Nd^{-1}_k(\Th)\cap H(\curl;D),  & {\Nd_{k,0}(\cT_D)} &:=\Nd^{-1}_k(\cT_D)\cap H_{0}(\curl;D),\\
\RT_k(\cT_D)&:=\RT^{-1}_k(\Th)\cap H(\dvg;D).  &
\end{align*}%
For any two-dimensional manifold $\Gamma_D\subset \partial D$, also define 
\begin{align*}
P_{k,0,\Gamma_D}(\cT_D) &:=P^{-1}_k(\cT_D)\cap H^1_{0,\Gamma_D}(D).
\end{align*}

We consider the following finite element approximation
of~\eqref{eq:WF} on a tetrahedral mesh $\Th$ of $\Omega$ of
granularity $h$: Find
$\vu_h\in \Nd_{k,0}(\Th)$ such that
\begin{subequations}
\label{eq:FEM}
\begin{align}
(\mu^{-1}\nabla\times \vu_h,\nabla\times\vw)_{\Omega} &= (\vj,\vw)_{\Omega} &&\forall \vw\in \Nd_{k,0}(\Th), \label{eq:FEMa}\\
(\vu_h,\nabla\psi)_{\Omega} &=0 &&\forall \psi\in P_{k,0}(\mathcal{T}_h). \label{eq:FEMb}
\end{align}
\end{subequations}
The approximation of the magnetic field is then defined by
\begin{align*}
\vH_h:=\mu^{-1}\nabla\times\vu_h.
\end{align*}
For the well-posedness of the continuous problem~\eqref{eq:WF}, see, e.g., \cite{kikuchi89}, 
and for the $h$-convergence of the finite element method~\eqref{eq:FEM}, see, e.g., \cite[Theorems~5.9 and~5.10]{hiptmair02}.

\section{A polynomial-degree-robust {a posteriori} error estimator}\label{sec:errEst}

As in~\cite{braess08,gedicke20}, the equilibrated \emph{a posteriori} error estimator we are going to introduce
is based on the following result (\cite[Theorem 10]{braess08}, \cite[Corollary 3.3]{gedicke20}):
\begin{thm}
\label{thm:estimator}
Let $\vu$ be the solution to (\ref{eq:WF}), let $\vu_h$ be the solution of (\ref{eq:FEM}), set $\vH:=\mu^{-1}\nabla\times\vu$ and $\vH_h:=\mu^{-1}\nabla\times\vu_h$, and let $\vj_h:=\nabla\times\vH_h$ be the discrete current distribution. If $\vtHdel\in L^2(\Omega)^3$ satisfies the (residual) equilibrium condition
\begin{align}
\label{eq:vtHdel}
\nabla\times \vtHdel &= \vj-\vj_h
\end{align}
in a distributional sense (i.e. $\langle \nabla\times \vtHdel, \vw \rangle = \langle \vj-\vj_h,\vw \rangle$ for all $\vw\in \cC_0^{\infty}(\Omega)^3$, where $\langle \cdot,\cdot \rangle$ denotes the application of a distribution to a function in $\cC_0^{\infty}(\Omega)^3$), then
\begin{align}
\label{eq:errEst2}
\|\mu^{1/2}(\vH-\vH_h)\|_{\Omega} &\leq \|\mu^{1/2}\vtHdel\|_{\Omega}.
\end{align}
\end{thm}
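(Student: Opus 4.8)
The plan is to run a Prager--Synge (hypercircle) argument, viewing $\vH-\vH_h$ and an auxiliary curl-free field as the two orthogonal legs of a right triangle in the $\mu$-weighted $L^2$ inner product, so that the hypotenuse $\vtHdel$ dominates the error. First I would record the two curl identities. Testing~\eqref{eq:WF} against $\vw\in H_0(\curl;\Omega)$ and integrating by parts shows $\nabla\times\vH=\vj$ in the distributional sense, while $\nabla\times\vH_h=\vj_h$ holds by the very definition of $\vj_h$. Subtracting gives $\nabla\times(\vH-\vH_h)=\vj-\vj_h$, which by the equilibrium hypothesis~\eqref{eq:vtHdel} equals $\nabla\times\vtHdel$. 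Hence the field
\[
\vr:=\vtHdel-(\vH-\vH_h)
\]
satisfies $\nabla\times\vr=\vzero$ in $\Omega$.

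The crucial step is the $\mu$-weighted orthogonality $(\mu(\vH-\vH_h),\vr)_\Omega=0$. Here I would exploit that $\mu\vH=\nabla\times\vu$ and $\mu\vH_h=\nabla\times\vu_h$, so that $\mu(\vH-\vH_h)=\nabla\times(\vu-\vu_h)$ with $\vu-\vu_h\in H_0(\curl;\Omega)$, since both $\vu$ and $\vu_h\in\Nd_{k,0}(\Th)$ have vanishing tangential trace. Because $\Omega$ is simply connected and $\vr\in L^2(\Omega)^3$ is curl-free, it admits a scalar potential $\vr=\nabla\phi$ with $\phi\in H^1(\Omega)$. The orthogonality then reduces to $(\nabla\times(\vu-\vu_h),\nabla\phi)_\Omega=0$, which follows from the integration-by-parts identity for $H_0(\curl;\Omega)$ fields: the boundary term carries the tangential trace $\vn\times(\vu-\vu_h)$, which vanishes, and the remaining volume term is $(\vu-\vu_h,\nabla\times\nabla\phi)_\Omega=0$.

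Finally I would expand the $\mu$-weighted norm using $\vtHdel=(\vH-\vH_h)+\vr$:
\begin{align*}
\|\mu^{1/2}\vtHdel\|_\Omega^2
&=\|\mu^{1/2}(\vH-\vH_h)\|_\Omega^2\\
&\quad+2\bigl(\mu(\vH-\vH_h),\vr\bigr)_\Omega
+\|\mu^{1/2}\vr\|_\Omega^2.
\end{align*}
By the orthogonality the cross term drops, leaving $\|\mu^{1/2}\vtHdel\|_\Omega^2=\|\mu^{1/2}(\vH-\vH_h)\|_\Omega^2+\|\mu^{1/2}\vr\|_\Omega^2\geq\|\mu^{1/2}(\vH-\vH_h)\|_\Omega^2$, and taking square roots gives~\eqref{eq:errEst2}. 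The main obstacle is precisely this orthogonality: it hinges on two facts that must be invoked with care, namely the gradient representation of a curl-free $L^2$ field on the simply connected domain $\Omega$, and the vanishing of the boundary tangential trace for the conforming N\'ed\'elec approximation, which together guarantee that the magnetic-field error is $\mu$-orthogonal to the curl-free correction $\vr$.
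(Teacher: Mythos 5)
Your proposal is correct and takes essentially the same route as the source of this result: the paper does not reprove Theorem~\ref{thm:estimator} but cites \cite[Theorem 10]{braess08} and \cite[Corollary 3.3]{GGP_arXiv}, whose argument is exactly this Prager--Synge orthogonality, namely that $\mu(\vH-\vH_h)=\nabla\times(\vu-\vu_h)$ with $\vu-\vu_h\in H_0(\curl;\Omega)$ is $\mu$-orthogonal to the curl-free field $\vtHdel-(\vH-\vH_h)$, represented as a gradient using the simple connectedness of $\Omega$. Your Pythagoras formulation is equivalent to (indeed marginally sharper than) the Cauchy--Schwarz formulation commonly written there, and your handling of the two delicate points (the gradient representation and the vanishing tangential trace in the integration by parts) is sound.
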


To construct a field $\vtHdel$ that satisfies \eqref{eq:vtHdel}, we use polynomial function spaces of degree $\kk$ and make the following two assumptions:
\begin{itemize}
  \item[A1.] The magnetic permeability $\mu$ is piecewise constant and the mesh is assumed to be chosen in such a way that $\mu$ is constant within each element. 
  \item[A2.] The current density $\vj$ is in $\RT_{\kk}(\mathcal{T}_h)\cap H(\dvg^0;\Omega)$.
  \end{itemize}\medskip
  
The case of a piecewise smooth instead of a piecewise constant magnetic permeability is discussed in Remark \ref{rem:variablemu} below.
  
\begin{rem}
In case assumption A2 is not satisfied,
$\vj$ can be replaced by a suitable projection $\pi_h\vj$ such as, for instance, the standard
Raviart-Thomas interpolate
in $\RT_{\kk}(\mathcal{T}_h)$.
As observed in~\cite[Section~3.1]{gedicke20}, the error then satisfies
$\|\mu^{1/2}(\vH-\vH_h)\|_{\Omega}\le \|\mu^{1/2}\vtHdel\|_{\Omega} +
\|\mu^{1/2}(\vH-\vH^{\prime})\|_{\Omega}$, with $\vH^{\prime}$ the
solution to~\eqref{eq:WF}
with $\pi_h\vj$ instead of $\vj$.
As proven in~\cite[Appendix~A]{gedicke20}, whenever $\vj$ admits a
compactly supported extension $\vj^*\in H(\dvg;\mathbb{R}^3)\cap
H^{\kk}(\mathbb{R}^3)^3$, the term $\|\mu^{1/2}(\vH-\vH') \|_{\Omega}$
is of order $h^{\kk+1}$ and therefore of higher order than $\|
\mu^{1/2}(\vH-\vH_h) \|_{\Omega}$.
\end{rem}

For the construction of a field $\vtHdel$ satisfying \eqref{eq:vtHdel}, we proceed as in \cite{gedicke20}, but perform one additional step (Step 4).

\emph{Step 1.} We compute $\vthHdel\in \Nd^{-1}_{\kk}(\Th)$ from the datum $\vj$ and the numerical solution~$\vH_h$ by solving 
\begin{subequations}
\label{eq:vthHdel}
\begin{align}
\nabla\times\vthHdel |_T &= \vjdel_T := \vj|_T-\nabla\times\vH_h|_T, && \\
(\vthHdel,\nabla\psi)_T &= 0 &&\forall \psi\in P_{\kk}(T) \label{eq:vthHdel1b}
\end{align}
\end{subequations}
for each $T\in\Th$. 

\emph{Step 2.} For each internal face $f\in\Fhin$, let $T^+$ and $T^-$ denote the two adjacent elements, let $\vn^{\pm}$ denote the normal unit vector pointing outward of $T^{\pm}$, let $\vH^{\pm}:=\vH|_{T^{\pm}}$ denote the vector field restricted to $T^{\pm}$, let $\jut{\vH}|_f := (\vn^+\times\vH^+ + \vn^-\times\vH^-)|_{f}$ denote the tangential jump operator, and let $\nabla_f$ denote the gradient operator restricted to the face $f$. We set $\vn_f:=\vn^+|_f$ and compute $\tlambda_f\in P_{\kk}(f)$ by solving
\begin{subequations}
\label{eq:tlambda}
\begin{align}
-\vn_f\times\nabla_f\tlambda_f &= \vthjdel_f := \jut{\vH_h + \vthHdel}|_f , \\
(\tlambda_f,1)_f &=0 \label{eq:tlambda1b} 
\end{align}
\end{subequations}
for each internal face $f\in\Fhin$. 

\emph{Step 3.} Let $\Qh$ denote the set of standard Lagrangian nodes corresponding to the finite element space $P_{\kk}(\Th)$.  We compute $\tphi\in P^{-1}_{\kk}(\Th)$ by solving, for each $\vx\in\Qh$, the small set of degrees of freedom $\{\tphi_{T,\vx}\}_{T:\overline{T}\ni\vx}$ such that
\begin{subequations}
\label{eq:tphi}
\begin{align}
\tphi_{T^+,\vx} - \tphi_{T^-,\vx} &= \tlambda_{f}(\vx) &&\forall f\in\Fhin:\partial f\ni\vx, \label{eq:tphi1a} \\
\sum_{T:\overline{T}\ni\vx} \tphi_{T,\vx} &= 0, &&
\end{align}
\end{subequations}
where $\tphi_{T,\vx}$ denotes the value of $\tphi|_T$ at node $\vx$.

\emph{Step 4.} Let $\Vh$ denote the set of all mesh vertices and, for each $\nu\in\Vh$, let $\Tnu$ denote the element patch consisting of all elements adjacent to $\nu$, set $\overline{\omega_\nu}:=\bigcup_{T\in\Tnu} \overline{T}$, and set $\Gamma_\nu :=\partial\omega_\nu$ whenever $\nu$ is an interior vertex and $\Gamma_\nu:=\partial\omega_\nu\setminus\partial\Omega$ whenever $\nu$ is a vertex on the boundary $\partial\Omega$. For each vertex $\nu\in\Vh$, we compute a continuous scalar field $\talpha_\nu\in P_{\kk+1,0,\Gamma_\nu}(\Tnu)$ such that 
\begin{align}
\label{eq:talpha_nu}
(\mu\nabla\talpha_\nu,\nabla\psi)_{\omega_\nu} &=  (\mu\nabla_h(\theta_\nu\tphi),\nabla\psi)_{\omega_\nu} &&\forall \psi\in P_{\kk+1,0,\Gamma_\nu}(\Tnu),
\end{align}
where $\nabla_h$ denotes the element-wise gradient operator and $\theta_\nu$ denotes the hat function corresponding to vertex $\nu$. We then extend $\talpha_\nu$ by zero to the rest of the domain $\Omega$ and set $\talpha := \sum_{\nu\in\Vh} \talpha_\nu$.

\emph{Step 5.} We compute the field
\begin{align*}
\vtHdel = \vthHdel + \nabla_h\tphi - \nabla\talpha,
\end{align*}
and compute the error estimator
\begin{align}
\label{eq:eta}
\eta_h :=\|\mu^{1/2}\vtHdel\|_{\Omega} =  \left(\sum_{T\in \Th}\eta_T^2\right)^{1/2}, &\quad\text{where}\quad
\eta_T :=\|\mu^{1/2}\vtHdel\|_T.
\end{align}

It can be shown that the problems in Steps~1--4 are well-posed and therefore that the error estimator is well defined.

\begin{thm}[well-posedness] \label{thm:wellposed}
Let $\vu$ be the solution to (\ref{eq:WF}), let $\vu_h$ be the solution to (\ref{eq:FEM}), and set $\vH:=\mu^{-1}\nabla\times\vu$ and $\vH_h:=\mu^{-1}\nabla\times\vu_h$. Also, assume that assumptions A1 and A2 hold true. Then the problems in Steps~1--4 are all well-defined and have a unique solution.
\end{thm}
\begin{proof}
In \cite{gedicke20}, it was shown that Steps~1--3 are well-defined. Step 4 is also well-defined, since $\talpha_\nu$ is the unique discrete finite element approximation for the elliptic problem $-\nabla\cdot\mu\nabla\alpha_\nu=\nabla\cdot\mu\nabla_h(\theta_\nu\tphi)$ in $\omega_\nu$ and $\alpha_\nu|_{\Gamma_\nu}=0$. 
\end{proof}

The resulting error estimator is reliable and provides an explicit upper bound on the error, i.e. the upper bound does not involve any unknown constants.

\begin{thm}[reliability]\label{thm:reliability}
Let $\vu$ be the solution to (\ref{eq:WF}), let $\vu_h$ be the solution to (\ref{eq:FEM}), and set $\vH:=\mu^{-1}\nabla\times\vu$ and $\vH_h:=\mu^{-1}\nabla\times\vu_h$. Also, 
assume that assumptions A1 and A2 hold true. Then the problems in Steps~1--4 are all well-defined and have a unique solution. Furthermore, if $\vtHdel$ is computed by following Steps~1--5, then
\begin{align*}
\|\mu^{1/2}(\vH-\vH_h)\|_{\Omega} &\leq \|\mu^{1/2}\vtHdel \|_{\Omega}=\eta_h.
\end{align*}
\end{thm}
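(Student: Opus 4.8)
The plan is to check that the field $\vtHdel$ delivered by Steps~1--5 satisfies the hypotheses of Theorem~\ref{thm:estimator}, i.e. $\vtHdel\in L^2(\Omega)^3$ and $\nabla\times\vtHdel=\vj-\vj_h$ in the distributional sense, and then to invoke that theorem directly. Membership $\vtHdel\in L^2(\Omega)^3$ is immediate, since $\vthHdel$, $\nabla_h\tphi$ and $\nabla\talpha$ are all piecewise polynomials. I would establish the distributional identity in the equivalent and more convenient form that the auxiliary field $\vG:=\vH_h+\vtHdel$ belongs to $H(\curl;\Omega)$ and satisfies $\nabla\times\vG=\vj$. Recalling that a piecewise-polynomial field lies in $H(\curl;\Omega)$ if and only if all its tangential face jumps $\jut{\cdot}|_f$ vanish, and that its distributional curl then coincides with its element-wise curl, this reduces to two checks: (i) $\nabla\times\vG|_T=\vj|_T$ for every $T\in\Th$, and (ii) $\jut{\vG}|_f=\vzero$ for every internal face $f\in\Fhin$. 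Once (i)--(ii) hold, $\nabla\times\vtHdel=\nabla\times\vG-\nabla\times\vH_h=\vj-\vj_h$ as distributions, and Theorem~\ref{thm:estimator} gives the bound.

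I would first dispatch well-posedness. The local problems of Steps~1--3 coincide with those of~\cite{GGP_arXiv} and are proven uniquely solvable there; I would only recall the three properties I need below. From \eqref{eq:vthHdel}: (a) $\nabla\times\vthHdel|_T=\vjdel_T=\vj|_T-\nabla\times\vH_h|_T$ on each $T$, existence following since $\vjdel_T$ is a divergence-free polynomial of admissible degree and hence in the range of the curl on $\Nd_{k'}(T)$, and uniqueness being fixed by the gauge~\eqref{eq:vthHdel1b}, which pins down the component in the curl kernel $\nabla P_{k'}(T)$. From \eqref{eq:tlambda}: (b) $-\vn_f\times\nabla_f\tlambda_f=\jut{\vH_h+\vthHdel}|_f$ on each internal face. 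From \eqref{eq:tphi}: (c) the scalar jump satisfies $(\tphi|_{T^+}-\tphi|_{T^-})|_f=\tlambda_f$ on each internal face, as both sides are degree-$k'$ polynomials on $f$ agreeing at all associated Lagrange nodes. The new ingredient is Step~4: for each $\nu\in\Vh$, problem~\eqref{eq:talpha_nu} is a symmetric variational problem for the bilinear form $(\mu\nabla\cdot,\nabla\cdot)_{\omega_\nu}$ on the finite-dimensional space $P_{k'+1,0,\Gamma_\nu}(\Tnu)$. As $\mu\ge\mu_0>0$ and the functions vanish on $\Gamma_\nu$, which has positive surface measure, a Friedrichs inequality makes this form coercive, so the problem has a unique solution. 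Since each $\talpha_\nu$ vanishes on $\Gamma_\nu$, its extension by zero lies in $H^1(\Omega)$, whence $\talpha=\sum_{\nu\in\Vh}\talpha_\nu\in H^1(\Omega)$.

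Check (i) I would obtain at once from (a) together with the fact that $\nabla_h\tphi$ and $\nabla\talpha$ are element-wise gradients and hence element-wise curl-free:
\begin{equation*}
\nabla\times\vG|_T=\nabla\times\vH_h|_T+\nabla\times\vthHdel|_T+\vzero-\vzero=\nabla\times\vH_h|_T+\vjdel_T=\vj|_T .
\end{equation*}
For check (ii), $\talpha\in H^1(\Omega)$ gives $\jut{\nabla\talpha}|_f=\vzero$, so the gradient correction contributes no jump. For the remaining contribution I would use that the tangential trace of $\nabla_h\tphi$ on $f$ is governed by the surface gradient of the scalar jump of $\tphi$; invoking (c) and then (b),
\begin{equation*}
\jut{\nabla_h\tphi}|_f=\vn_f\times\nabla_f(\tphi|_{T^+}-\tphi|_{T^-})|_f=\vn_f\times\nabla_f\tlambda_f=-\jut{\vH_h+\vthHdel}|_f .
\end{equation*}
Summing the surviving contributions then yields $\jut{\vG}|_f=\jut{\vH_h+\vthHdel}|_f+\jut{\nabla_h\tphi}|_f=\vzero$, which is (ii). With (i)--(ii) in hand, the conclusion follows from Theorem~\ref{thm:estimator}.

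The routine parts are the coercivity argument for Step~4 and the algebra of (i)--(ii). The step requiring the most care---and where the construction genuinely does its work---is the jump identity in (ii): correctly expressing $\jut{\nabla_h\tphi}|_f$ through the surface gradient of the scalar jump of $\tphi$, matching it against Step~2 by means of the identity $(\tphi|_{T^+}-\tphi|_{T^-})|_f=\tlambda_f$ of Step~3, so that the jumps of $\vH_h+\vthHdel$ cancel exactly. Compared with~\cite{GGP_arXiv}, the only genuinely new observation is that the correction $-\nabla\talpha$ added in Step~4 is curl-free and tangentially continuous, so it leaves both (i) and (ii) intact and therefore influences only the magnitude of $\|\mu^{1/2}\vtHdel\|_\Omega$---hence efficiency---and not reliability.
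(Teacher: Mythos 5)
Your proposal is correct and follows essentially the same route as the paper: verify that $\vtHdel$ satisfies the hypotheses of Theorem~\ref{thm:estimator}, handle the well-posedness of the new Step~4 as a standard coercive finite-element problem, and observe that the correction $-\nabla\talpha$ is the gradient of a globally continuous $H^1(\Omega)$ function and hence curl-free, so it leaves the equilibrium condition $\nabla\times\vtHdel=\vj-\vj_h$ intact. The only difference is one of self-containedness: where the paper simply cites \cite{GGP_arXiv} for the fact that Steps~1--3 produce a field with $\nabla\times(\vthHdel+\nabla_h\tphi)=\vj-\vj_h$ in the distributional sense, you re-derive this by the explicit tangential-jump cancellation (your checks (i)--(ii) for $\vG=\vH_h+\vtHdel$), which is precisely the mechanism used in that reference, so your rendering is a valid unfolding of the citation rather than a different argument.
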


\begin{proof}
In \cite{gedicke20}, it was shown that Steps~1--3 result in a field $\vthHdel+\nabla_h\tphi$ that satisfies
\begin{align*}
\nabla\times(\vthHdel+\nabla_h\tphi) &= \vj-\vj_h
\end{align*}
in a distributional sense. Since $\nabla\times\nabla\talpha \equiv \vzero$, we have that
\begin{align*}
\nabla\times\vtHdel = \nabla\times(\vthHdel+\nabla_h\tphi-\nabla\talpha)=\nabla\times(\vthHdel+\nabla_h\tphi) &= \vj-\vj_h.
\end{align*}
The theorem then follows from Theorem \ref{thm:estimator}.
\end{proof}

The following theorem is the main result of this paper. It states that the error estimator is efficient and that the efficiency index is bounded by a constant that is independent of the polynomial degree.

\begin{thm}[local efficiency]
\label{thm:efficiency}
Let $\vu$ be the solution to (\ref{eq:WF}), let $\vu_h$ be the solution to (\ref{eq:FEM}), and set $\vH:=\mu^{-1}\nabla\times\vu$ and $\vH_h:=\mu^{-1}\nabla\times\vu_h$. Also, 
assume that assumptions A1 and A2 hold true. If $\vtHdel$ is computed by following Steps~1--5, then
\begin{align}
\label{eq:efficiency}
\eta_T &= \|\mu^{1/2}\vtHdel\|_T \leq C\sum_{T':\overline{T'}\cap\overline{T}\neq\emptyset} \|\mu^{1/2}(\vH-\vH_h)\|_{T'}
\end{align}
for all $T\in\Th$, where $C$ is some positive constant that depends on the magnetic permeability $\mu$ and the shape-regularity of the mesh, but not on the mesh width $h$ or the polynomial degree $\kk$.
\end{thm}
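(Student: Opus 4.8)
The plan is to estimate the element-local energy $\|\mu^{1/2}\vtHdel\|_T$ by splitting $\vtHdel=\vthHdel+(\nabla_h\tphi-\nabla\talpha)$ and bounding the two contributions separately, in each case tracing the data produced by Steps~1--4 back to the local error. The organising observation---the ``decomposition of the error'' alluded to in the introduction---is that every datum driving the construction is itself a trace of $\vH-\vH_h$: since $\nabla\times\vH=\vj$ and $\nabla\times\vH_h=\vj_h$ one has $\vjdel_T=\nabla\times(\vH-\vH_h)|_T$, and since $\vH\in H(\curl;\Omega)$ has single-valued tangential trace, the face data of Step~2 satisfy $\vthjdel_f=\jut{\vthHdel-(\vH-\vH_h)}|_f$. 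Thus each local problem has a right-hand side controlled by $\vthHdel$ and by the error on the adjacent elements and faces, and it remains to show that every local solve is a $p$-robustly stable operation.

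First I would bound the element term $\|\mu^{1/2}\vthHdel\|_T$. The gauge condition~\eqref{eq:vthHdel1b} characterises $\vthHdel$ as the minimal-$L^2$ element of $\Nd_{k'}(T)$ whose curl equals $\vjdel_T$, so it suffices to exhibit one polynomial competitor of controlled norm. Here the regularised Poincar\'e integral operator of~\cite{costabel10} enters: it furnishes a right inverse of the curl that preserves polynomials and is bounded on the relevant Sobolev scale with a constant depending only on the shape of $T$. Applying it to the divergence-free polynomial $\vjdel_T=\nabla\times(\vH-\vH_h)|_T$ produces $\vw^\ast\in\Nd_{k'}(T)$ with $\nabla\times\vw^\ast=\vjdel_T$ and $\|\vw^\ast\|_T\lesssim\|\vjdel_T\|_{H^{-1}(T)}\lesssim\|(\vH-\vH_h)\|_T$, the last step using that the curl maps $L^2$ into $H^{-1}$. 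Minimality of $\vthHdel$ together with $\mu_0\le\mu\le\mu_1$ then gives $\|\mu^{1/2}\vthHdel\|_T\lesssim\|\mu^{1/2}(\vH-\vH_h)\|_T$ with a degree-independent constant.

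Next comes the conformity correction, where the new Step~4 and the broken-extension tool are used. By~\eqref{eq:talpha_nu}, $\nabla\talpha_\nu$ is the $\mu$-orthogonal projection of $\nabla_h(\theta_\nu\tphi)$ onto conforming gradients on $\omega_\nu$, so, after localising via the partition of unity $\sum_\nu\theta_\nu\equiv 1$ to write $\nabla_h\tphi-\nabla\talpha=\sum_{\nu\in\Vh}(\nabla_h(\theta_\nu\tphi)-\nabla\talpha_\nu)$, each patch contribution equals a distance to the conforming space,
\begin{equation*}
\|\mu^{1/2}(\nabla_h(\theta_\nu\tphi)-\nabla\talpha_\nu)\|_{\omega_\nu}
=\min_{\psi\in P_{k'+1,0,\Gamma_\nu}(\Tnu)}\|\mu^{1/2}(\nabla_h(\theta_\nu\tphi)-\nabla\psi)\|_{\omega_\nu}.
\end{equation*}
The stable broken $H^1$ polynomial extension of~\cite{ern19} bounds this minimum $p$-robustly by the interior jumps of $\theta_\nu\tphi$, which through the explicit solves of Steps~2--3 are governed by the face potentials $\tlambda_f$, hence by the tangential jumps $\vthjdel_f=\jut{\vthHdel-(\vH-\vH_h)}$.

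It then remains to control these jumps by the error, and this I expect to be the principal obstacle: a direct trace inequality would lose a power of $k'$, so instead I would keep the jumps paired against the stable extension provided by~\cite{ern19} and integrate by parts, using the equilibrium identity $\nabla\times(\vthHdel+\nabla_h\tphi)=\vj-\vj_h=\nabla\times(\vH-\vH_h)$ from the reliability proof to convert the face pairings into a volume pairing with $\vH-\vH_h$ over the patch; a Cauchy--Schwarz step then yields the error on the star of $T$ with a degree-independent constant. Summing the patch contributions with the finite overlap guaranteed by shape-regularity and absorbing the permeability bounds $\mu_0,\mu_1$ into $C$ completes the proof. The $p$-independence hinges entirely on pairing the jumps with the stable broken extension rather than estimating them individually, and this is precisely the step where the residual-based shortcut of~\cite{GGP_arXiv} ceases to be robust.
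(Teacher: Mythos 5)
Your proposal reproduces the paper's argument for roughly its first two thirds. The element bound $\|\mu^{1/2}\vthHdel\|_T\leq C\|\mu^{1/2}\vHdel\|_T$ via the gauge-induced minimality of $\vthHdel$, the regularised Poincar\'e operator of \cite{costabel10}, and the estimate $\|\vjdel_T\|_{H^{-1}(T)}\lesssim\|\vHdel\|_T$ is exactly the paper's Section~\ref{sec:vthHdelBound} and Lemma~\ref{lem:invCurlBound} (the paper performs the $H^{-1}$ argument on the reference element with covariant/Piola maps to keep the constant $h$-independent, a technicality you skip but which is repairable). Likewise, the partition-of-unity splitting, the characterisation of $\|\mu^{1/2}\nabla_h(\theta_\nu\tphi-\talpha_\nu)\|_{\omega_\nu}$ as a distance to the conforming space, and the use of \cite{ern19} to pass from the discrete to the continuous patch minimisation coincide with \eqref{eq:talpha_nu2}, \eqref{eq:alpha_nu}, and \eqref{eq:talphaBound}.

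The gap is in your final step, which is precisely where the paper's main new device lives. You propose to bound the continuous patch minimum by ``pairing the jumps against the stable extension and integrating by parts, using the equilibrium identity to convert face pairings into volume pairings with $\vH-\vH_h$.'' The obstruction is the cutoff: the jumps to be extended are $\ju{\theta_\nu\tphi}_f=\theta_\nu\tlambda_f$, not $\tlambda_f$. Writing the patch minimum, after element-wise integration by parts, as $\sum_{f\in\Fnu^I}(\mu\nabla u^*\cdot\vn_f,\theta_\nu\tlambda_f)_f$ for the minimiser $u^*$, the relation $-\vn_f\times\nabla_f\tlambda_f=\jut{\vthHdel-\vHdel}_f$ indeed lets you turn the part involving $\nabla_f(\theta_\nu\tlambda_f)$ that carries $\nabla_f\tlambda_f$ into volume pairings (the element-wise curls of $\vthHdel-\vHdel$ even vanish), but the product rule leaves face terms proportional to $\tlambda_f\,\vn_f\times\nabla_f\theta_\nu$ with $|\nabla_f\theta_\nu|\sim h_\nu^{-1}$. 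Controlling these requires an $L^2(f)$ bound on $\tlambda_f$, i.e.\ a trace estimate on the error data---exactly the non-$p$-robust step you set out to avoid; worse, $\vHdel$ has only $H(\curl)$ regularity, so its tangential trace is not even in $L^2(f)$. The paper escapes this circle with two ingredients absent from your proposal: the global error decomposition $\vHdel=\vhHdel+\nabla_h\phi-\nabla\alpha$ of Section~\ref{sec:errDec}, which produces a \emph{continuous} $\alpha\in H^1(\Omega)$ with $\nabla_h(\tphi-\alpha)=\vHdel-\vthHdel$, so that $\theta_\nu\bigl(\tphi-\alpha-\overline{(\tphi-\alpha)}^{\omega_\nu}\bigr)$ is an explicit admissible competitor with the correct jumps and zero trace on $\Gamma_\nu$; and the broken Poincar\'e inequality of Proposition~\ref{prp:phiProjErr} (applicable only because \eqref{eq:tlambda1b} and \eqref{eq:tphi1a} force mean-zero jumps), which absorbs the $h_\nu^{-1}$ factor coming from $\nabla\theta_\nu$ into $\|\nabla_h(\tphi-\alpha)\|_{\omega_\nu}$, a quantity already controlled by the element bound and \eqref{eq:vHdelBound1}. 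Without constructing these potentials, or an equivalent device, your sketch does not close.
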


The proof of Theorem \ref{thm:efficiency} is given in the next section.

\begin{rem}
As observed in \cite[Remark~3.4]{gedicke20}, for the case $\kk=1$,
this algorithm requires solving local problems with 6 unknowns per
element in Step~1, 3 unknowns per face in Step~2, and $(\# T\in\Tnu)\approx 24$ unknowns per
vertex $\nu$ in Step~3.
The problem in the additional Step~4 involves $1+(\# e:\overline{e}\ni\nu) \approx 15$ unknowns per vertex when $\kk=1$.
\end{rem}

\begin{rem}\label{rem:variablemu}
\newcommand{\Ph}{\Pi_h^{\kk-1}}
Assume that $\mu$ is \emph{piecewise smooth}, and that the mesh is
chosen in such a way that $\mu$ is smooth within each element. The
definition of the error estimator $\eta_h$ can be extended to this
case as follows.

Define $\vH_h^\ast:=\Ph\vH_h = \Ph (\mu^{-1}\nabla\times\vu_h)$, where $\Ph$ is
the weighted $ L^2(\Omega)^3$ projection onto $P_{\kk-1}^{-1}(\Th)^3$
such that $(\mu\Ph \vH_h,\vw)_\Omega = (\mu\vH_h,\vw)_{\Omega}$ for all $\vw\in P_{\kk-1}^{-1}(\Th)^3$. Set $\vj_h:=\nabla\times \vH_h$ and
$\vj_h^*:=\nabla\times\vH_h^*$. Then, compute $\vtHdel$ such that
  $\nabla\times\vtHdel = \vj-\vj_h^*$ by following Steps 1-5 with
  $\vH_h$ replaced by $\vH_h^*$. One can prove, in a way analogous to
  \cite[Section 3.2]{gedicke20} and the proof of Theorem
  \ref{thm:reliability}, that the problems in Steps 1-4 with $\vH_h$
  replaced by $\vH_h^*$ are well-posed, and thus $\vtHdel$ is well-defined. 
Then, the new local and global error estimators are defined as
\begin{align*}
  \eta_T &:= \left(\|\mu^{1/2}\vtHdel \|_{T}^2 +\| \mu^{1/2}(\vH_h-\vH_h^\ast)\|_{T}^2\right)^{1/2},\\
\eta_h &:= \left(\|\mu^{1/2}\vtHdel \|_{\Omega}^2 +\| \mu^{1/2}(\vH_h-\vH_h^\ast)\|_{\Omega}^2\right)^{1/2} = \left(\sum_{T\in\Th} \eta_T^2\right)^{1/2}.
\end{align*}
Clearly, for piecewise constant $\mu$, the new estimators coincide with the old ones.

The reliability bound 
$\|\vH-\vH_h\|_{\Omega}\le \eta_h$ follows from Theorem~\ref{thm:estimator}
and the fact that $\vtHdel+\vH_h^*-\vH_h$ satisfies the residual equilibrium condition
\begin{align*}
\nabla\times(\vtHdel+\vH_h^*-\vH_h) = \vj-\vj_h.
\end{align*}

For the local efficiency bound, one can check that Theorem~\ref{thm:efficiency} still holds true when replacing $\vH_h$ by $\vH_h^*$. 
We then only need to prove efficiency of the additional term~$\|\mu^{1/2}(\vH_h-\vH_h^*)\|_T$ for each $T\in\Th$. We have
\[
  \begin{split}
    \|\mu^{1/2}(\vH_h-\vH_h^*)\|_{T}
    &=\|\mu^{1/2}({\mathrm I}-\Ph)\vH_h \|_{T} \\
    &\le \|\mu^{1/2}({\mathrm I}-\Ph)(\vH-\vH_h)\|_{T} + \|\mu^{1/2}({\mathrm I}-\Ph)\vH\|_{T}\\
    &\le \| \mu^{1/2}(\vH-\vH_h)\|_{T} + \| \mu^{1/2}({\mathrm I}-\Ph) \vH \|_{T},
  \end{split}
\]
for all $T\in\Th$, where ${\mathrm I}$ denotes the identity operator and where the last inequality follows from the $L^2$ stability of the weighted $L^2$ projection. 
The behaviour of the second term on the right-hand side depends on the smoothness of $\vH$ and on the mesh grading towards possible solution singularities. Therefore, it behaves similarly to the actual error $\|\mu^{1/2}(\vH-\vH_h)\|_T$. 
\end{rem}

\section{Proof of Theorem \ref{thm:efficiency}}\label{sec:analysis}

In this section, we let $\vu$, $\vu_h$, $\vH$, $\vH_h$, and $\vtHdel$ be the fields as defined in Theorem \ref{thm:efficiency} and let $\vthHdel$, $\tphi$, $\talpha_\nu$, and $\talpha$ as described in Steps~1--5 of Section \ref{sec:errEst}. We will also always let $C$ denote some positive constant that may depend on the magnetic permeability $\mu$ and the shape regularity of the mesh, but not on the mesh width $h$ or the polynomial degree $\kk$.

In Section \ref{sec:errDec}, we introduce a vector field $\vhHdel\in H(\curl;\Th)$ and scalar fields $\phi\in H^1(\Th)$ and $\alpha\in H^1(\Omega)$, and show that the error $\vHdel:=\vH-\vH_h$ can be written as $\vHdel=\vhHdel + \nabla_h\phi - \nabla\alpha$. We also show there that
\begin{subequations}
\label{eq:vHdelBound1}%
\begin{align}
\| \mu^{1/2} \vhHdel \|_T \leq \| \mu^{1/2}\vHdel\|_T &&\forall T\in\Th \text{ (Section \ref{sec:errDec})}, \\
\| \mu^{1/2} \nabla(\phi-\alpha) \|_T \leq \| \mu^{1/2}\vHdel\|_T &&\forall T\in\Th \text{ (Section \ref{sec:errDec})}.
\end{align}
\end{subequations}
In Sections \ref{sec:vthHdelBound} and \ref{sec:tphiBound} we then prove that
\begin{subequations}
\label{eq:vtHdelBound2}
\begin{align}
\|\mu^{1/2} \vthHdel\|_T &\leq C\|\mu^{1/2} \vHdel\|_T &&\forall T\in\Th \text{ (Section \ref{sec:vthHdelBound})}, \label{eq:vthHdelBound} \\
\| \mu^{1/2} \nabla_h(\theta_\nu\tphi-\talpha_\nu)\|_{\omega_\nu} &\leq C\| \mu^{1/2} \vHdel \|_{\omega_\nu} &&\forall \nu\in\Vh\text{ (Section \ref{sec:tphiBound})} \label{eq:tphiBound}.
\end{align}
\end{subequations}
Since 
\begin{align*}
\vtHdel|_T &= \vthHdel|_T + \sum_{\nu:\nu\subset\partial_T} \nabla(\theta_\nu\tphi-\talpha_\nu)|_T,
\end{align*}
we can use the triangle inequality and \eqref{eq:vtHdelBound2} to obtain
\begin{align*}
\|\mu^{1/2}\vtHdel\|_T &\leq \|\mu^{1/2} \vthHdel \|_T + \sum_{\nu:\nu\subset\partial_T} \|\mu^{1/2} \nabla(\theta_\nu\tphi-\talpha_\nu)\|_T \\
&\leq \|\mu^{1/2} \vthHdel \|_T + \sum_{\nu:\nu\subset\partial_T} \|\mu^{1/2} \nabla_h(\theta_\nu\tphi-\talpha_\nu)\|_{\omega_\nu} \\
&\leq C\sum_{T':\overline{T'}\cap\overline{T}\neq\emptyset} \|\mu^{1/2}\vHdel\|_{T'},
\end{align*}
which completes the proof of Theorem \ref{thm:efficiency}. It thus remains to prove \eqref{eq:vHdelBound1} and \eqref{eq:vtHdelBound2}.

\subsection{Decomposition of the error and proof of~\eqref{eq:vHdelBound1}}
\label{sec:errDec}
Define $\vhHdel\in H(\curl;\Th)$ as the unique solution of
\begin{subequations}
\label{eq:vhHdel}
\begin{align}
\nabla\times\vhHdel |_T &= \vjdel_T = \vj|_T-\nabla\times\vH_h|_T, &&  \label{eq:vhHdel1a}\\
(\mu\vhHdel,\nabla\psi)_T &= 0 &&\forall \psi\in H^1(T), \label{eq:vhHdel1b}
\end{align}
\end{subequations}
Since $\nabla\times(\vhHdel-\vthHdel)|_T=\vjdel_T-\vjdel_T \equiv 0$ for each $T\in\Th$, we can define $\phi^\Delta\in H^1(\Th)$ such that
\begin{subequations}
\label{eq:phidel}
\begin{align}
-\nabla_h\phi^\Delta &= \vhHdel-\vthHdel, &&\\
(\phi^\Delta,1)_T &= 0 &&\forall T\in\Th.
\end{align}
\end{subequations}
Now, set $\phi:=\tphi+\phi^\Delta\in H^1(\Th)^3$. We can then write $\vtHdel=\vhHdel+\nabla_h\phi-\nabla\talpha$. Finally, since $\nabla\times(\vHdel-\vtHdel)=\vjdel-\vjdel \equiv \vzero$, we can define $\alpha^\Delta\in H^1(\Omega)$ such that
\begin{subequations}
\label{eq:alpha}
\begin{align}
-\nabla\alpha^\Delta &= \vHdel-\vtHdel, \\
(\alpha^\Delta,1)_{\Omega} &= 0.
\end{align}
\end{subequations}
If we now set $\alpha=\talpha+\alpha^\Delta\in H^1(\Omega)$, we obtain the following decomposition of the error:
\begin{align*}
\vHdel = \vhHdel + \nabla_h\phi - \nabla\alpha.
\end{align*}

Note that, because of (\ref{eq:vhHdel1b}), we have that $(\mu\vhHdel,\nabla(\phi-\alpha))_T=0$ for all $T\in\Th$. From Pythagoras' theorem, it then follows that $\|\mu^{1/2}\vHdel\|^2_T=\|\mu^{1/2}\vhHdel\|_T^2+\|\mu^{1/2}\nabla(\phi-\alpha)\|_T^2$, which proves the bounds in \eqref{eq:vHdelBound1}.

\subsection{Upper bound on $\|\mu^{1/2} \vthHdel\|_T$ in terms of $\|\mu^{1/2} \vHdel\|_T$ (proof of~\eqref{eq:vthHdelBound})}
\label{sec:vthHdelBound}
Firstly, observe that 
\begin{align}
\label{eq:vthHdelMin}
\|\vthHdel\|_T &= \inf_{{\vH' \in \Nd_{\kk}(T),  \nabla\times\vH'=\vjdel_T}} \| \vH'\|_T,
\end{align}
for each $T\in\Th$. Indeed, let $\vH' \in \Nd_{\kk}(T)$ with $\nabla\times\vH'=\vjdel_T$. Then $\nabla\times(\vthHdel|_T-\vH')=\vj_T-\vj_T  \equiv 0$ and so we can write $\vH'-\vthHdel|_T =\nabla \psi$ for some $\psi \in P_{\kk}(T)$. From \eqref{eq:vthHdel1b}, it then follows that $(\vthHdel, \vH'-\vthHdel)_T=(\vthHdel,\nabla \psi)_T=0$ and from Pythagoras' theorem, it then follows that $\|\vH'\|^2_T = \|\vthHdel \|^2_T + \|\vH'-\vthHdel\|_T^2\geq \|\vthHdel\|_T^2$.

In an analogous way, we can show that
\begin{align}
\label{eq:vhHdelMin}
\|\mu^{1/2}\vhHdel\|_T &= \inf_{{\vH' \in H(\curl;T),  \nabla\times\vH'=\vjdel_T}} \|\mu^{1/2}\vH'\|_T.
\end{align}

We also need the following result, which follows from the stability of the regularised Poincar\'e integral operator that was proven in \cite{costabel10}.

\begin{lem}
\label{lem:invCurlBound}
Let $T$ be a tetrahedron. For any $\vr\in\RT_{\kk}(T)\cap H(\dvg^0;T)$, there exists a $\vG\in\Nd_{\kk}(T)$ such that $\nabla\times\vG=\vr$ and
\begin{align*}
\|\vG\|_T &\leq C\inf_{\vG'\in H(\curl;T), \nabla\times\vG'=\vr} \|\vG'\|_T.
\end{align*}
\end{lem}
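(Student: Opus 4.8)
The plan is to construct $\vG$ explicitly as the image of $\vr$ under the regularized Poincar\'e integral operator of \cite{costabel10}, and to exploit three of its properties: on forms of positive degree it is an \emph{exact} right inverse of the exterior derivative, it maps the relevant polynomial spaces into one another, and it is bounded from $L^2$ into $H^1$ with a constant depending only on the underlying star-shaped domain. First I would pass to a reference tetrahedron $\hat T$ via an affine map, transporting $\vr$ and the competitor fields by the covariant and contravariant Piola transforms. Since these transforms preserve $\Nd_{k'}$, $\RT_{k'}$, the constraint $\nabla\times\vG=\vr$, the divergence-free condition, and the $L^2$ norms up to factors controlled by shape-regularity, it suffices to prove the estimate on $\hat T$ with a constant depending only on $\hat T$.

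Let $R$ denote the regularized Poincar\'e operator on $\hat T$ associated with a ball contained in $\hat T$, where (following the de Rham complex) $\RT$-fields are identified with $2$-forms, $\Nd$-fields with $1$-forms, and $\curl$ with the exterior derivative $d$. Set $\vG:=R\vr$. On forms of positive degree the Poincar\'e null-homotopy identity $dR+Rd=\mathrm{Id}$ holds exactly; applying it to the $2$-form $\vr$ and using $d\vr=\dvg\,\vr=0$ yields $\nabla\times\vG=\vr$. To see that $\vG\in\Nd_{k'}(\hat T)$, I would first note that a divergence-free element of $\RT_{k'}(\hat T)$ has componentwise degree at most $k'-1$, since the divergence forces the top-degree term $\vx\,w$ to vanish. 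The operator $R$ raises the polynomial degree by at most one and, being an average of translated (homogeneous) Koszul operators, maps polynomial $2$-forms of degree $k'-1$ into the first-kind N\'ed\'elec space, which is invariant under translations; hence $\vG\in\Nd_{k'}(\hat T)$.

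The crux is to bound $\|\vG\|_{\hat T}$ by the infimum on the right-hand side rather than by $\|\vr\|_{\hat T}$; the latter is useless, since $\|\vr\|_{\hat T}=\|\nabla\times\vG'\|_{\hat T}$ is unrelated to $\inf\|\vG'\|_{\hat T}$. The key is to apply the homotopy identity to the competitors themselves: for \emph{any} $\vG'\in H(\curl;\hat T)$ with $\nabla\times\vG'=\vr$, the identity on $1$-forms gives $\nabla(R\vG')+R(\nabla\times\vG')=\vG'$, that is,
\begin{align*}
\vG=R\vr=\vG'-\nabla(R\vG').
\end{align*}
Since $R$ is bounded from $L^2(\hat T)$ into $H^1(\hat T)$ with a constant $C_R$ depending only on $\hat T$ \cite{costabel10}, the operator $\nabla R$ is bounded on $L^2(\hat T)$, whence
\begin{align*}
\|\vG\|_{\hat T}\le\|\vG'\|_{\hat T}+\|\nabla(R\vG')\|_{\hat T}\le(1+C_R)\|\vG'\|_{\hat T}.
\end{align*}
Taking the infimum over all admissible $\vG'$ and transforming back to $T$ gives the asserted bound, with $C$ depending only on the shape-regularity of the mesh.

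I expect the main obstacle to be precisely this last reduction to the infimum: deriving $\vG=\vG'-\nabla(R\vG')$ and recognizing that the gain comes from the $L^2\to H^1$ smoothing of $R$ (a genuine consequence of $R$ being a pseudodifferential operator of order $-1$, as established in \cite{costabel10}), rather than from any crude estimate through $\|\vr\|_{\hat T}$. A secondary technical point is the verification that $\vG=R\vr$ lands in $\Nd_{k'}(\hat T)$ and not merely in $P_{k'}(\hat T)^3$, which relies on the degree count for divergence-free Raviart--Thomas fields together with the degree- and translation-preserving structure of $R$.
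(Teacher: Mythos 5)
Your proposal is correct, and it shares the paper's overall skeleton: reduction to the reference tetrahedron via the covariant and Piola transforms, the same regularized Poincar\'e operator $\vhR$ (the operator $R_2$ of \cite[Definition 3.1]{costabel10}) used to define $\vG$, and the same two structural facts --- that $\vhR$ is an exact right inverse of the curl on divergence-free fields, and that it maps $\RT_{k'}\cap H(\dvg^0;\hat T)$ into $\Nd_{k'}(\hat T)$ (you prove the latter directly via the degree count for divergence-free Raviart--Thomas fields and the Koszul structure of the averaged operator, where the paper cites \cite{gopalakrishnan04}). Where you genuinely diverge is the key quasi-optimality estimate. The paper takes $s=-1$ in \cite[Corollary 3.4]{costabel10}, i.e.\ $\|\vhR\vhr\|_{\hat T}\leq C\|\vhr\|_{H^{-1}(\hat T)}$, and converts the negative-order norm into the infimum by duality: for any competitor $\vhG'$ with $\nabla\times\vhG'=\vhr$,
\begin{equation*}
\|\vhr\|_{H^{-1}(\hat T)}=\sup_{\vw\in H_0^1(\hat T)^3\setminus\{\vzero\}}\frac{(\vhG',\nabla\times\vw)_{\hat T}}{\|\vw\|_{H^1(\hat T)^3}}\leq \sqrt{2}\,\|\vhG'\|_{\hat T}.
\end{equation*}
You instead apply the homotopy identity on $1$-forms to each competitor, obtaining $\vhR\vhr=\vhG'-\nabla(\vhR^{(1)}\vhG')$, where $\vhR^{(1)}$ denotes the companion regularized Poincar\'e operator acting on vector fields viewed as $1$-forms, and then invoke the $s=0$ case of the same corollary ($\vhR^{(1)}:L^2\to H^1$ bounded). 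Both routes rest on the same stability theorem of \cite{costabel10}, only at different Sobolev indices. Yours avoids negative-order norms on a domain (whose definition requires some care) and yields the explicit constant $1+C_R$; the paper's route only ever applies a Poincar\'e operator to the polynomial datum $\vhr$, never to the rough competitors. That last point is the one small thing you should make explicit: the identity $\nabla(\vhR^{(1)}\vhG')+\vhR(\nabla\times\vhG')=\vhG'$ is established in \cite{costabel10} for smooth forms, so its use for a general $\vhG'\in H(\curl;\hat T)$ needs the (routine) remark that it extends by density of smooth fields in $H(\curl;\hat T)$, since all operators appearing on both sides are continuous in the relevant norms. With that one sentence added, your argument is complete.
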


\begin{proof}
Let $\hat{T}$ denote the reference tetrahedron. We will construct an operator $\vhR:H(\dvg;\hat{T})\rightarrow H(\curl;\hat T)$, independent of $\kk$, such that 
\begin{enumerate}
  \item[C1.] $\nabla\times\vhR\vhr=\vhr$ whenever $\vhr\in H(\dvg^0,\hat T)$.
  \item[C2.] $\vhR\vhr\in\Nd_{\kk}(\hat T)$ whenever $\vhr\in\RT_{\kk}\cap H(\dvg^0;\hat{T})$.
\end{enumerate}
To construct such an operator, define $\vhR_{\vhz}:\cC^{\infty}(\overline{\hat{T}})\rightarrow H(\curl;\hat T)$, for any $\vhz\in\hat{T}$, as the following Poincar\'e integral operator:
\begin{align*}
\vhR_{\vhz}\vhr(\vhx) &:= -(\vhx-\vhz)\times \int_0^1 \tau\vhr(\tau(\vhx-\vhz)+\vhz)\;\mathrm{d}\tau.
\end{align*}
This operator is based on the integral operator used in \cite[Theorem 4.11]{spivak65}. The operator $\vhR_{\vhz}$ can be extended to $H(\dvg;\hat T)$ and satisfies conditions C1 and C2 \cite[Theorem 2.1 and Remark 3.3]{gopalakrishnan04}. Now, let $B$ be an open ball in $\hat{T}$ and let $\vartheta\in\cC_0^\infty(\hat T)$ be an analytic function with support on $B$ such that $\int_{\hat T} \vartheta(\vhx)\;\dhx = 1$. We then define $\vhR:\cC^{\infty}(\overline{\hat{T}})\rightarrow H(\curl;\hat T)$ as the following regularised Poincar\'e integral operator:
\begin{align*}
\vhR\vhr(\vhx):= \int_{\hat T} \vartheta(\vhz)\vhR_{\vhz}\vhr(\vhx) \;\dhz.
\end{align*}
Since $\vhR_{\vhz}$, for every $\vhz\in\hat T$, can be extended to $H(\dvg;\hat T)$ and satisfies conditions C1 and C2, so does $\vhR$. By applying the coordinate transformations $\vhy=\tau(\vhx-\vhz)+\vhz$ and $t=(1-\tau)^{-1}$, the above can be rewritten as
\begin{align*}
\vhR\vhr(\vhx) = \int_{\hat T} -(\vhx-\vhy)\times\vhr(\vhy) \left(\int_{1}^\infty t(t-1)\vartheta(\vhx+t(\vhy-\vhx)) \;\mathrm{d}t\right) \;\dhy,
\end{align*}
where $\vartheta$ and $\vhr$ are extended by zero to $\mathbb{R}^3$. This is exactly the operator $R_2$ of \cite[Definition 3.1]{costabel10}. By taking $s=-1$ in \cite[Corollary 3.4]{costabel10}, it follows that
\begin{align}
\label{eq:invCurlBound1}
\| \vhR\vhr \|_{\hat T} &\leq C \|\vhr\|_{H^{-1}(\hat T)} &&\forall \vhr\in H(\dvg^0,\hat T),
\end{align}
where we stress once more that the operator $\vhR$ and the constant $C$ are independent of $\kk$.

Now, let $\vhG'\in H(\curl;\hat T)$ and $\vhr\in H(\dvg^0,\hat T)$ be two functions such that $\nabla\times\vhG'=\vhr$. Then
\begin{align*}
\|\vhr\|_{H^{-1}(\hat T)^3} &= \sup_{\vw\in H_0^1(\hat T)^3\setminus\{0\}} \frac{(\vhr,\vw)_{\hat T}}{\|\vw\|_{H^1(\hat T)^3}} \\
&= \sup_{\vw\in H_0^1(\hat T)^3 \setminus\{0\}} \frac{(\nabla\times\vhG',\vw)_{\hat T}}{\|\vw\|_{H^1(\hat T)^3}} \\
&= \sup_{\vw\in H_0^1(\hat T)^3 \setminus\{0\}} \frac{(\vhG',\nabla\times\vw)_{\hat T}}{\|\vw\|_{H^1(\hat T)^3}} \\
&\leq \sup_{\vw\in H_0^1(\hat T)^3 \setminus\{0\}} \frac{\|\vhG'\|_{\hat T} \|\nabla\times\vw\|_{\hat T}}{\|\vw\|_{H^1(\hat T)^3}} \\
&\leq \sqrt{2} \|\vhG'\|_{\hat T}
\end{align*}
where $\|\vw\|_{H^1(\hat T)^3}^2:= \|\vw\|_{\hat T}^2+\|\nabla\vw\|_{\hat T}^2$ and where the fourth line follows from the Cauchy--Schwarz inequality and the last line from the fact that $\|\nabla\times\vw\|_{\hat T}\leq \sqrt{2} \|\vw\|_{H^1(\hat T)^3}$. From \eqref{eq:invCurlBound1}, it then follows that
\begin{align}
\label{eq:invCurlBound2}
\|\vhR\vhr\|_{\hat T} &\leq C \inf_{\vhG'\in H(\curl;\hat T), \nabla\times\vhG'=\vhr} \|\vhG'\|_{\hat T}.
\end{align}

Now, let $\boldsymbol{\varphi}_T:\hat{T}\rightarrow T$ denote the affine element mapping and let $\jac_T := [\frac{\partial \boldsymbol{\varphi}_T}{\partial\hat{x}_1}\, \frac{\partial \boldsymbol{\varphi}_T}{\partial\hat{x}_2}\, \frac{\partial \boldsymbol{\varphi}_T}{\partial\hat{x}_3}]$ be the Jacobian of $\boldsymbol{\varphi}_T$, with $\frac{\partial \boldsymbol{\varphi}_T}{\partial\hat{x}_i}$ column vectors. We define the covariant transformation $\vT_{T,\curl}:H(\curl;\hat T)\rightarrow H(\curl;T)$ and the Piola contravariant transformation $\vT_{T,\dvg}:H(\dvg;\hat T)\rightarrow H(\dvg;T)$ such that
\begin{align*}
\vT_{T,\curl}\vhG\circ \boldsymbol{\varphi}_T &:=\jac_T^{-t} \vhG, \qquad \vT_{T,\dvg}\vhr\circ \boldsymbol{\varphi}_T := \frac{1}{\mathrm{det}(\jac_T)}\jac_T \vhr,
\end{align*}
where $\jac_T^{-t}$ denotes the transposed of the inverse of $\jac_T$ and $\mathrm{det}(\jac_T)$ denotes the determinant of $\jac_T$. We set $\vG=\vT_{T,\curl}\vhR\vT_{T,\dvg}^{-1}\vr$. Then $\nabla\times\vG=\vr$. For any $\vG'\in H(\curl;T)$ that satisfies $\nabla\times\vhG'=\vr$, we can then derive
\begin{align*}
\|\vG\|_T &\leq Ch_T^{3/2} \|\vT_{T,\curl}^{-1}\vG\|_{\hat T} \\
&= Ch_T^{3/2} \|\vhR\vT_{T,\dvg}^{-1}\vr \|_{\hat T} \\
&\leq Ch_T^{3/2} \| \vT_{T,\curl}^{-1}\vG' \|_{\hat T} \\
&\leq C \| \vG' \|_{T},
\end{align*}
where $h_T$ denotes the diameter of $T$, where the first and last lines follow from standard scaling arguments, and where the third line follows from \eqref{eq:invCurlBound2} and the fact that $\nabla\times \vT_{T,\curl}^{-1}\vG' = \vT_{T,\dvg}^{-1}\nabla\times\vG' = \vT_{T,\dvg}^{-1}\vr$. This then proves the lemma.
\end{proof}

From \eqref{eq:vthHdelMin}, Lemma \ref{lem:invCurlBound}, and \eqref{eq:vHdelBound1}, it follows that
\begin{align*}
\|\mu^{1/2} \vthHdel\|_T \leq C \| \mu^{1/2} \vhHdel\|_T \leq C \| \mu^{1/2} \vHdel \|_T
\end{align*}
for all $T\in\Th$, which proves \eqref{eq:vthHdelBound}.

\subsection{Upper bound on $\| \mu^{1/2} \nabla_h(\theta_\nu\tphi-\talpha_\nu)\|_{\omega_\nu}$ in terms of $\| \mu^{1/2} \vHdel \|_{\omega_\nu}$ (proof of~\eqref{eq:tphiBound})}
\label{sec:tphiBound}
For all $\nu\in\Vh$, define $\Fnu^I:=\{f\in\Fhin \;|\; \partial f\ni \nu\}$ as the set of all internal faces that are connected to $\nu$. Observe that 
\begin{align}
\label{eq:talpha_nu2}
\|\mu^{1/2} \nabla_h(\theta_\nu\tphi-\talpha_\nu)\|_{\omega_\nu} = \inf_{\substack{u'\in P_{\kk+1}^{-1}(\Tnu), \\ \ju{u'}_f=\ju{\theta_\nu\tphi}_f \;\forall f\in\Fnu^I, \\ u'|_f=0 \;\forall f\subset\Gamma_\nu}} \| \mu^{1/2}  \nabla_h u'\|_{\omega_\nu}
\end{align}
for all $\nu\in\Vh$. Indeed, let $u'\in P^{-1}_{\kk+1}(\Tnu)$ such that $\ju{u'}_f=\ju{\theta_\nu\tphi}_f$ for all $f\in\Fnu^I$ and $u'|_f=0$ for all $f\subset{\Gamma_\nu}$. Then $u'-\theta_\nu\tphi \in P_{\kk+1,0,\Gamma_\nu}(\omega_\nu)$ and so $w:=u'- (\theta_\nu\tphi -\talpha_\nu) \in P_{\kk+1,0,\Gamma_\nu}(\omega_\nu)$. Using \eqref{eq:talpha_nu}, we can then derive
\begin{align*}
\big(\mu\nabla_h(\theta_\nu\tphi -\talpha_\nu),\nabla_hw \big)_{\omega_\nu} = \big(\mu\nabla_h(\theta_\nu\tphi)-\mu\nabla\talpha_\nu,\nabla w\big)_{\omega_\nu} = 0.
\end{align*}
From Pythagoras' theorem it then follows that 
\begin{align*}
\|\mu^{1/2} \nabla_hu'\|^2_{\omega_\nu} &= \|\mu^{1/2} \nabla_h(\theta_\nu\tphi -\talpha_\nu) \|^2_{\omega_\nu} + \|\mu^{1/2} \nabla w\|^2_{\omega_\nu} \geq \| \mu^{1/2}  \nabla_h(\theta_\nu\tphi -\talpha_\nu) \|^2_{\omega_\nu},
\end{align*}
which proves \eqref{eq:talpha_nu2}.

Now, define $\alpha_\nu\in H^1_{0,\Gamma_\nu}(\omega_\nu)$ such that
\begin{align*}
(\mu\nabla\alpha_\nu,\nabla w)_{\omega_\nu} &= (\mu\nabla_h(\theta_\nu\tphi),\nabla w)_{\omega_\nu} &&\forall w\in H^1_{0,\Gamma_\nu}(\omega_\nu).
\end{align*}
In a similar way as for the discrete case \eqref{eq:talpha_nu2}, one can prove that
\begin{align}
\label{eq:alpha_nu}
\|\mu^{1/2} \nabla_h(\theta_\nu\tphi-\alpha_\nu)\|_{\omega_\nu} = \inf_{\substack{u'\in H^1(\Tnu), \\ \ju{u'}_f=\ju{\theta_\nu\tphi}_f \;\forall f\in\Fnu^I, \\ u'|_f=0 \;\forall f\subset\Gamma_\nu}} \|\mu^{1/2}  \nabla_h u'\|_{\omega_\nu}
\end{align}
for all $\nu\in\Vh$.

From \cite[Theorem 2.4]{ern19}, it follows that
\begin{align*}
\inf_{\substack{u'\in P_{\kk+1}^{-1}(\Tnu), \\ \ju{u'}_f=\ju{\theta_\nu\tphi}_f \;\forall f\in\Fnu^I, \\ u'|_f=0 \;\forall f\subset\Gamma_\nu}} \|\nabla_h u'\|_{\omega_\nu} &\leq C\inf_{\substack{u'\in H^1(\Tnu), \\ \ju{u'}_f=\ju{\theta_\nu\tphi}_f \;\forall f\in\Fnu^I, \\ u'|_f=0 \;\forall f\subset\Gamma_\nu}} \|\nabla_h u'\|_{\omega_\nu}
\end{align*}
for all $\nu\in\Vh$. From \eqref{eq:talpha_nu2}, \eqref{eq:alpha_nu}, and the above, it then follows that
\begin{align}
\label{eq:talphaBound}
\|\mu^{1/2}  \nabla_h(\theta_\nu\tphi-\talpha_\nu)\|_{\omega_\nu} &\leq C \| \mu^{1/2}  \nabla_h(\theta_\nu\tphi-\alpha_\nu)\|_{\omega_\nu}
\end{align}
for all $\nu\in\Vh$. Properties \eqref{eq:talpha_nu2}, \eqref{eq:alpha_nu}, and \eqref{eq:talphaBound} are also a consequence of \cite[Corollary 3.1, Remark 3.2]{ern19}.

It now remains to derive an upper bound on
$\|\nabla_h(\theta_\nu\tphi-\alpha_\nu)\|_{\omega_\nu}$ in terms of
$\|\vHdel\|_{\omega_\nu}$. To do this, we need the following result,
which follows immediately from \cite[Theorem 5.1, Remark
5.3]{brenner03}; for completeness, we report a proof of it in
Appendix~\ref{appA}. 

\begin{prp}
\label{prp:phiProjErr}
For every $u\in H^1(\Tnu)$, with $(\ju{u},1)_f=0$ for each $f\in\Fnu^I$, we have that
\begin{align*}
\|u-\overline{u}^{\omega_\nu} \|_{\omega_\nu} \leq Ch_\nu \| \nabla_hu\|_{\omega_\nu},
\end{align*}
where $\overline{u}^{\omega_\nu}$ denotes the average of $u$ in $\omega_\nu$. 
\end{prp}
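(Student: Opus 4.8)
The plan is to prove the estimate by a scaling argument that reduces it to a fixed reference configuration, followed by a compactness (contradiction) argument. First I would rescale the patch to unit size via $\hat{\vx}=(\vx-\nu)/h_\nu$, mapping $\omega_\nu$ to a patch $\hat\omega_\nu$ of diameter $\approx 1$. Under this affine change of variables the broken gradient seminorm and the quantity $\|u-\overline{u}^{\omega_\nu}\|_{\omega_\nu}$ transform by explicit powers of $h_\nu$, and—crucially—the hypothesis $(\ju{u},1)_f=0$ is scale invariant, being a linear moment condition on each face. Hence it suffices to prove $\|u-\overline{u}^{\hat\omega_\nu}\|_{\hat\omega_\nu}\le C\,\|\nabla_h u\|_{\hat\omega_\nu}$ on the rescaled patch (the broken gradient now taken in the $\hat{\vx}$ variables), with $C$ depending only on the shape-regularity of the mesh; undoing the scaling then restores the factor $h_\nu$.

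For the rescaled estimate I would argue by contradiction on a fixed patch geometry. With $V:=\{u\in H^1(\Tnu):(\ju{u},1)_f=0\ \forall f\in\Fnu^I\}$, suppose the inequality fails; then there is a sequence $u_n\in V$ on $\hat\omega_\nu$ with $\overline{u_n}^{\hat\omega_\nu}=0$, $\|u_n\|_{\hat\omega_\nu}=1$, and $\|\nabla_h u_n\|_{\hat\omega_\nu}\to 0$ (subtracting the mean and normalising is legitimate, since adding a global constant preserves both the broken gradient and membership in $V$). This sequence is bounded in the broken space $H^1(\Tnu)$, so by the Rellich theorem applied elementwise a subsequence converges, on each $T\in\Tnu$, strongly in $L^2(T)$ and weakly in $H^1(T)$ to a limit $u$. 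The weak limit satisfies $\nabla u|_T=0$ on every $T$, hence $u$ is piecewise constant; moreover the elementwise traces converge weakly in $L^2(f)$, so the zeroth moment of the jump passes to the limit and $u\in V$.

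The decisive algebraic observation is then that a piecewise constant function in $V$ must be globally constant: on each internal face the jump of a piecewise constant is itself a constant, and its vanishing zeroth moment forces that constant to be zero, so the values on the two adjacent elements coincide. Since the tetrahedra of the patch are connected through faces of $\Fnu^I$ (two elements of the patch sharing a face necessarily share $\nu$, so that face lies in $\Fnu^I$, and the star of $\nu$ is connected), all elementwise values agree and $u$ is a single constant. Together with $\overline{u}^{\hat\omega_\nu}=\lim\overline{u_n}^{\hat\omega_\nu}=0$ this gives $u\equiv 0$, contradicting $\|u\|_{\hat\omega_\nu}=\lim\|u_n\|_{\hat\omega_\nu}=1$ obtained from the strong $L^2$ convergence. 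This contradiction establishes the rescaled estimate.

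The main obstacle I anticipate is not the compactness argument on a single geometry but the uniformity of $C$ across all admissible patches: different vertices produce patches with differing numbers of tetrahedra and differing connectivity, so the argument must be run over the whole shape-regular family rather than on one fixed domain. I would handle this either by noting that shape-regularity limits the patches to finitely many combinatorial types, each parametrised by a compact set of admissible vertex positions, and taking the supremum of the (finitely many, continuously varying, hence bounded) constants; or, more directly, by invoking the piecewise Poincar\'e--Friedrichs inequality of Brenner, whose right-hand side involves exactly the jump moments $\int_f\ju{u}$, which vanish by hypothesis, leaving only the broken gradient with the $h_\nu$ dependence recovered from the scaling above.
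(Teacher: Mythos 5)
Your proof is correct, and it takes a genuinely different route from the paper's. The paper (Appendix~A) argues constructively: it takes the $L^2$ projection $u_0$ of $u$ onto $P_0^{-1}(\Tnu)$, bounds $\|u_0-\overline{u}^{\omega_\nu}\|_{\omega_\nu}$ by the face jumps of $u_0$ via a finite-dimensional inequality whose kernel statement is precisely your algebraic observation (a piecewise constant with vanishing zeroth-order jump moments on the face-connected patch is globally constant), then exploits the hypothesis $(\ju{u},1)_f=0$ to write $\|\ju{u_0}\|_f^2=(\ju{u_0-u},\ju{u_0})_f$ and estimates this by elementwise trace/interpolation bounds $\|\ju{u_0-u}\|_f\leq C\bigl(h_{T^+}^{1/2}\|\nabla u\|_{T^+}+h_{T^-}^{1/2}\|\nabla u\|_{T^-}\bigr)$, finishing with the triangle inequality and $\|u-u_0\|_T\leq Ch_T\|\nabla u\|_T$; the factor $h_\nu$ thus falls out of local estimates rather than from a global rescaling. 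Your route---rescale the patch to unit size, then run a Rellich compactness/contradiction argument---avoids trace and interpolation estimates altogether and is conceptually minimal, but it is non-constructive and concentrates the entire difficulty in the uniformity of the constant over all admissible patch geometries, which you rightly identify as the delicate point. Note that the paper faces the same issue (its discrete constant ``only depends on the configuration of the element patch'') and dispatches it with essentially the finitely-many-configurations reasoning you sketch, and in the main text by citing Brenner's piecewise Poincar\'e--Friedrichs inequality, which is also your second fallback; so the two proofs are on equal footing there. What the paper's approach buys is that the geometry dependence is isolated in a single finite-dimensional inequality on the jump data, where it is easier to control than the best constant of your infinite-dimensional variational problem; what yours buys is brevity, no reliance on interpolation theory, and a transparent identification of the one algebraic fact (connectivity of the vertex star through faces in $\Fnu^{I}$, which you justify correctly) that makes the statement true.
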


Now, note that $\ju{\theta_\nu\tphi-\alpha_\nu}_f =  \ju{\theta_\nu\tphi}_f = \ju{\theta_\nu(\tphi - \alpha - \overline{(\tphi - \alpha )}^{\omega_\nu})}_f$ for all $f\in\Fnu^I$. Using \eqref{eq:alpha_nu}, we can then derive
\begin{align*}
&\|\mu^{1/2}\nabla_h(\theta_\nu\tphi-\alpha_\nu)\|_{\omega_\nu} \leq \| \mu^{1/2}\nabla_h (\theta_\nu(\tphi - \alpha - \overline{(\tphi - \alpha )}^{\omega_\nu})) \|_{\omega_\nu} \\
&\qquad= \|\mu^{1/2}(\nabla\theta_\nu) (\tphi - \alpha - \overline{(\tphi - \alpha )}^{\omega_\nu}) + \mu^{1/2}\theta_\nu \nabla_h(\tphi - \alpha - \overline{(\tphi - \alpha )}^{\omega_\nu}) \|_{\omega_\nu} \\
&\qquad\leq \| \mu^{1/2}(\nabla\theta_\nu) (\tphi - \alpha - \overline{(\tphi - \alpha )}^{\omega_\nu}) \|_{\omega_\nu} + \| \mu^{1/2}\theta_\nu \nabla_h(\tphi - \alpha - \overline{(\tphi - \alpha )}^{\omega_\nu}) \|_{\omega_\nu} \\
&\qquad\leq Ch_{\nu}^{-1} \| \tphi - \alpha - \overline{(\tphi - \alpha )}^{\omega_\nu} \|_{\omega_\nu} + \| \nabla_h(\tphi - \alpha - \overline{(\tphi - \alpha )}^{\omega_\nu}) \|_{\omega_\nu} \\
&\qquad\leq C\| \nabla_h(\tphi - \alpha) \|_{\omega_\nu} \\
&\qquad\leq C\| \mu^{1/2} \nabla_h(\tphi - \alpha) \|_{\omega_\nu}
\end{align*}
for all $\nu\in\Vh$, where the fifth line follows from Proposition~\ref{prp:phiProjErr}, \eqref{eq:tlambda1b}, and \eqref{eq:tphi1a}. Now, recall that $\tphi-\alpha=\phi-\alpha - \phi^\Delta$ and $\nabla_h\phi^\Delta=\vthHdel-\vhHdel$ (see Section \ref{sec:errDec}). We can use the triangle inequality, \eqref{eq:vthHdelBound}, and \eqref{eq:vHdelBound1} to derive
\begin{align*}
\| \mu^{1/2} \nabla_h(\tphi-\alpha) \|_{\omega_\nu} &\leq \|\mu^{1/2} \nabla_h(\phi-\alpha)\|_{\omega_\nu} + \| \mu^{1/2} \nabla_h\phi^\Delta\|_{\omega_\nu} \\
&\leq \|\mu^{1/2} \nabla_h(\phi-\alpha)\|_{\omega_\nu} + \|\mu^{1/2} \vhHdel\|_{\omega_\nu}  + \|\mu^{1/2}\vthHdel \|_{\omega_\nu} \\
&\leq C\| \mu^{1/2} \vHdel \|_{\omega_\nu}
\end{align*}
for all $\nu\in\Vh$. Inequality \eqref{eq:tphiBound} then follows from the last two inequalities and~\eqref{eq:talphaBound}.

\section{Numerical experiments}\label{sec:numerics}
In the following, we investigate the reliability, efficiency, and polynomial-degree robustness
of the equilibrated {\em a posteriori} error estimator $\eta_h$ constructed following Steps 1-5 in Section \ref{sec:errEst}.
We present numerical experiments for the unit cube and the
L-brick domain on the same test problems as in our previous work~\cite{gedicke20}.
In all experiments we 
set $\mu=1$ unless stated otherwise.
As in~\cite{gedicke20}, we do not project the right hand side $\mathbf{j}$ onto $D_k(\mathcal{T}_h)\cap H(\dvg^0;\Omega)$.
This introduces small compatibility errors in Steps 1-3 that can be neglected.
We investigate the reliability and efficiency of $\eta_h$ for
uniformly refined and adaptively refined
meshes. For the adaptive mesh refinement, we employ the standard adaptive finite element loop, \emph{solve}, \emph{estimate}, \emph{mark}, and \emph{refine}. 
We use a multigrid preconditioned conjugate gradient solver \cite{H1999}, choose $\theta=0.5$ in the bulk marking strategy \cite{Doerfler1996}, and refine the mesh using a bisection strategy \cite{AMP2000}. In order to ensure that the discretisation of $\vj$ is compatible, we add a small gradient correction term following \cite[Section 4.1]{creuse19a}.

\subsection{Unit cube examples}
\begin{figure}[t]
\includegraphics[width=0.49\textwidth]{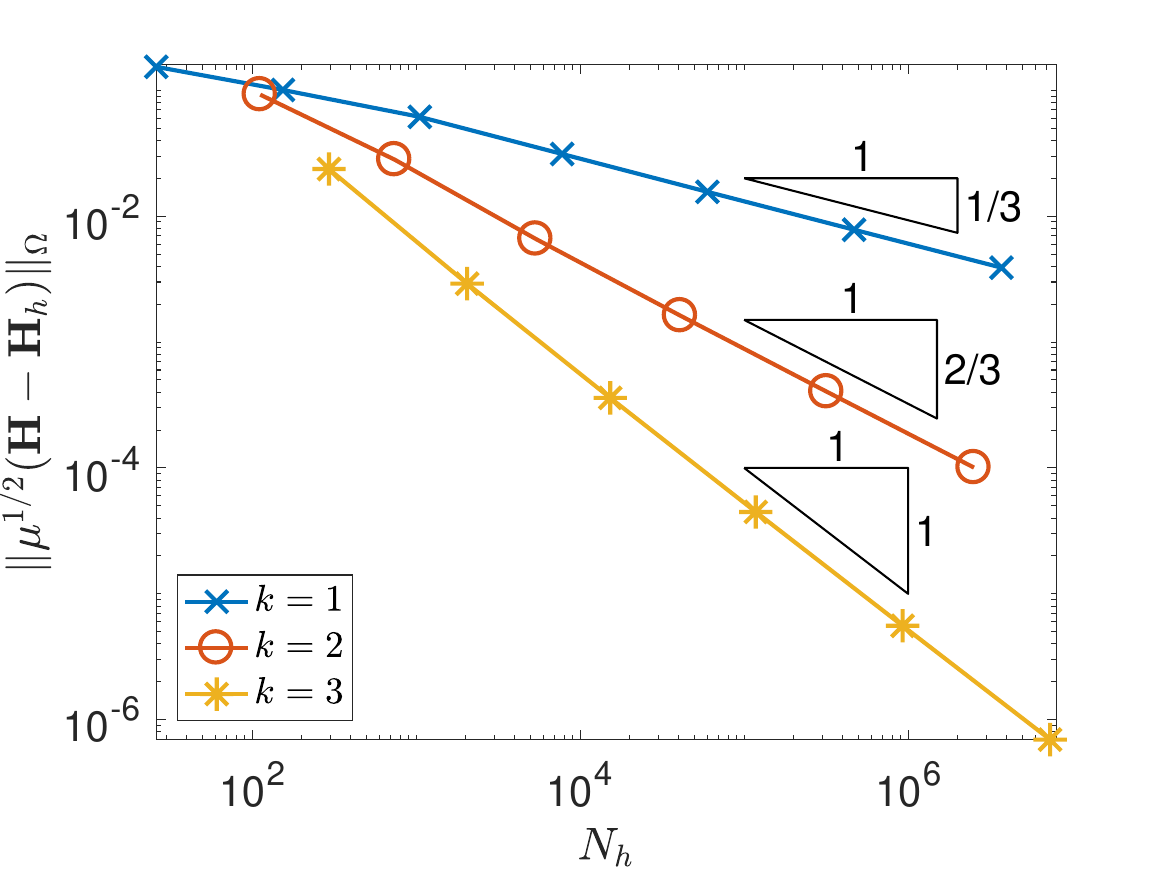}
\includegraphics[width=0.49\textwidth]{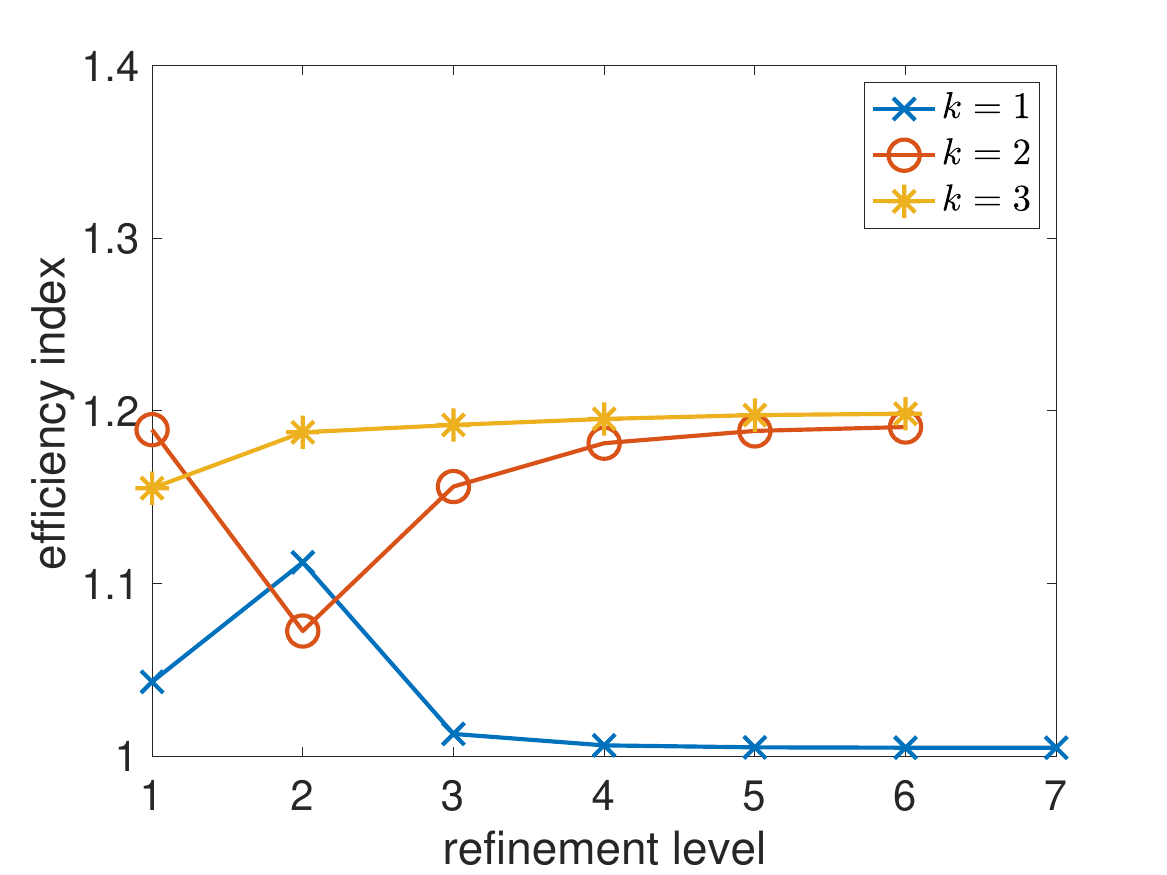}
\caption{Error and efficiency indices for the unit cube example with
  polynomial solution and
 uniformly refined
  meshes.}
\label{fig:unit:cube}
\end{figure}
\begin{figure}[t]
\includegraphics[width=0.49\textwidth]{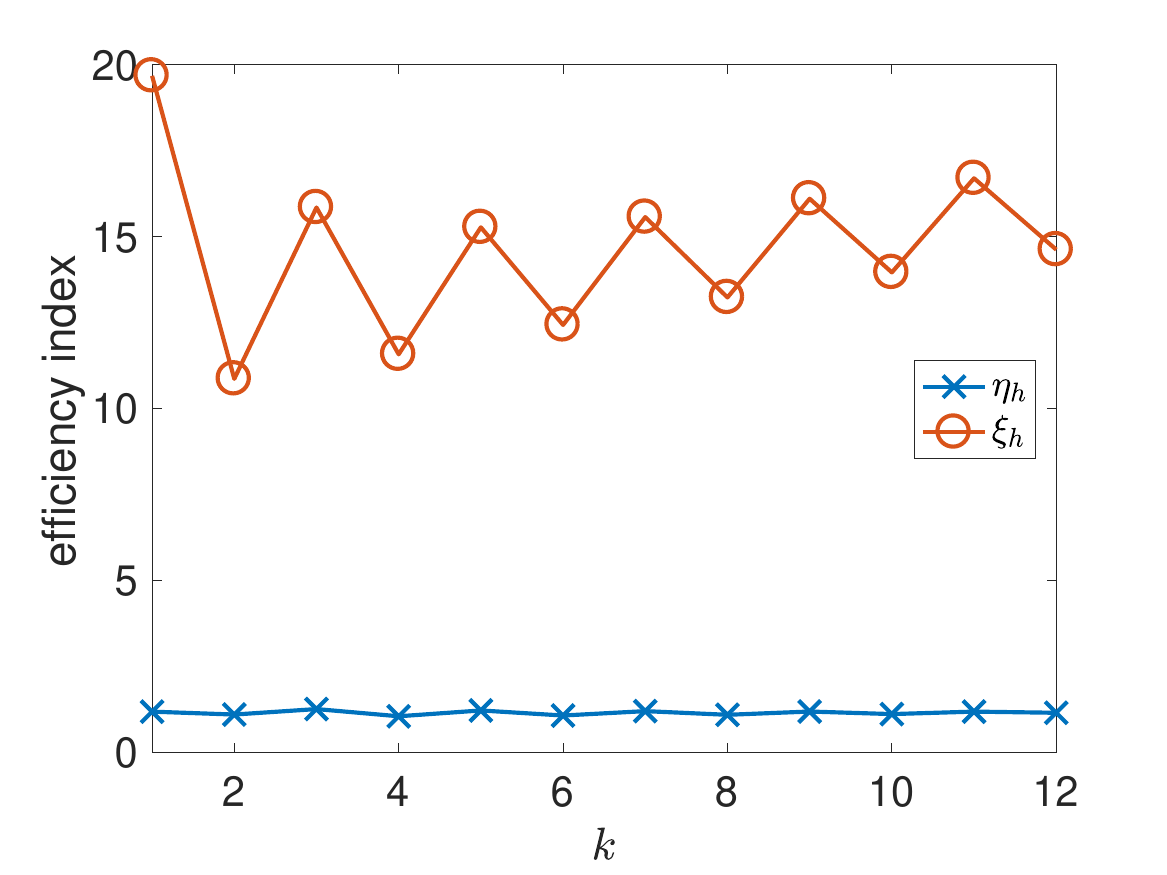}
\includegraphics[width=0.49\textwidth]{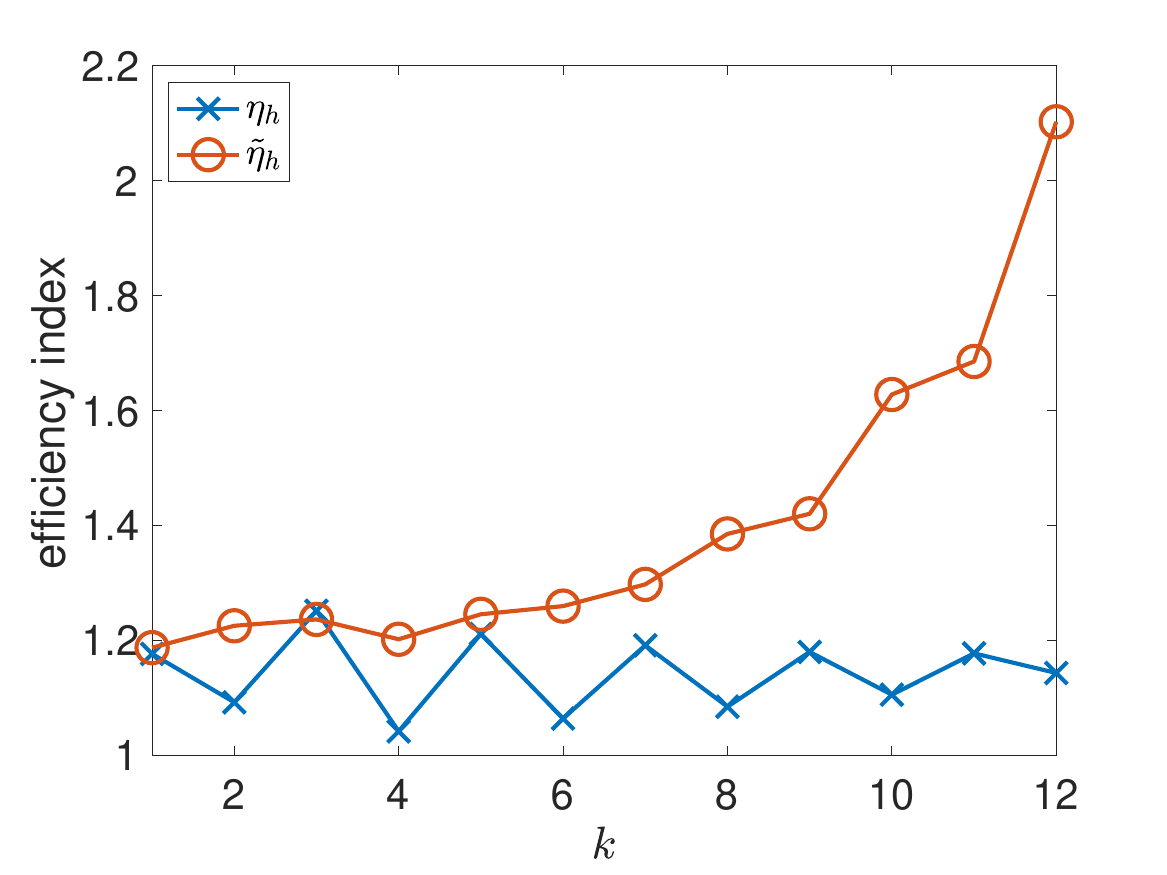}
\caption{Polynomial robustness of the equilibrated {\em a posteriori} error estimator $\eta_h$ in comparison to the residual {\em a posteriori} error estimator $\xi_h$ (left) and the equilibrated estimator $\tilde\eta_h$ (right)
for the second unit cube example and a quasi-uniform mesh with
24 elements.}
\label{fig:unit:cube:2}
\end{figure}
In this example, we solve the 
Maxwell problem on the unit cube $\Omega=(0,1)^3$ with
$\vn\cdot\vH = 0$ on $\partial \Omega$,
for two different right hand sides.
\par

Firstly, we choose the right-hand side 
$\mathbf{j}$ according to
the polynomial solution 
\[
\vH=\nabla\times\vu, \qquad \mathbf{u}(x,y,z) = \left( \begin{array}{c} y(1-y)z(1-z)\\ x(1-x)z(1-z) \\x(1-x)y(1-y) \end{array}\right).
\]
The errors $\| \mu^{1/2}(\vH-\vH_h) \|_{\Omega}$ and efficiency indices
$\eta_h/\| |\mu^{1/2}(\vH-\vH_h) \|_{\Omega}$ are presented in
Figure~\ref{fig:unit:cube} for $k=1,2,3$ and uniformly refined
meshes.
We observe optimal rates $\mathcal{O}(h^k) = \mathcal{O}(N_h^{-k/3})$, 
$N_h = \operatorname{dim}(R_k(\mathcal{T}_h))$, for the convergence of the errors, 
and efficiency indices between $1$ and $2$.
Note that, for $k=3$, $\mathbf{j}\in D_k(\mathcal{T}_h)\cap H(\dvg^0;\Omega)$, hence in that case there is no compatibility error.
\par

For the investigation of the robustness with respect to the polynomial degree $k$, we consider the right-hand side
$\mathbf{j}$ according to the non-polynomial solution 
\[
\vH=\nabla\times\vu, \qquad 
\mathbf{u}(x,y,z) = \left( \begin{array}{c} 
\sin(\pi y)\sin(\pi z)\\  
\sin(\pi x)\sin(\pi z)\\
\sin(\pi x)\sin(\pi y) 
\end{array}\right).
\]
We compare the efficiency indices for $k$-refinement of $\eta_h$ to those of the
residual {\em a posteriori} error estimator \cite{beck00}
\[ 
\xi_h^2 := \sum_{T\in\Th}  \frac{ \mu_T h_T^2}{k^2} \| \vj - \nabla\times\vH_h \|_{T}^2
+ \sum_{f\in\Fhin}\frac{ \mu_f h_f}{k}  \| \jut{ \vH_h } \|_{f}^2,
\]
where $\mu_T$ is the value of $\mu$ at element $T$ and $\mu_f$ is the average value of $\mu_T$ of the elements $T$ adjacent to $f$.
We also compare the efficiency indices to those of the equilibrated
{\em a posteriori} error estimator
\[ 
\tilde\eta_h := \| \mu^{1/2} (\hat{\vtH}^{\Delta} +\nabla_h\tilde\phi) \|_{\Omega}
\]
of our previous work \cite{gedicke20}, which does not include the computation of $\tilde\alpha$. 
We observe in Figure~\ref{fig:unit:cube:2} that both the efficiency indices
for $\xi_h$ and $\tilde\eta_h$ grow in $k$ (although $\tilde\eta_h$ remains confined to small values, for all tested polynomial degrees), while those of $\eta_h$ are stable in $k$.

\subsection{L-brick example}
\begin{figure}[t]
\includegraphics[width=0.49\textwidth]{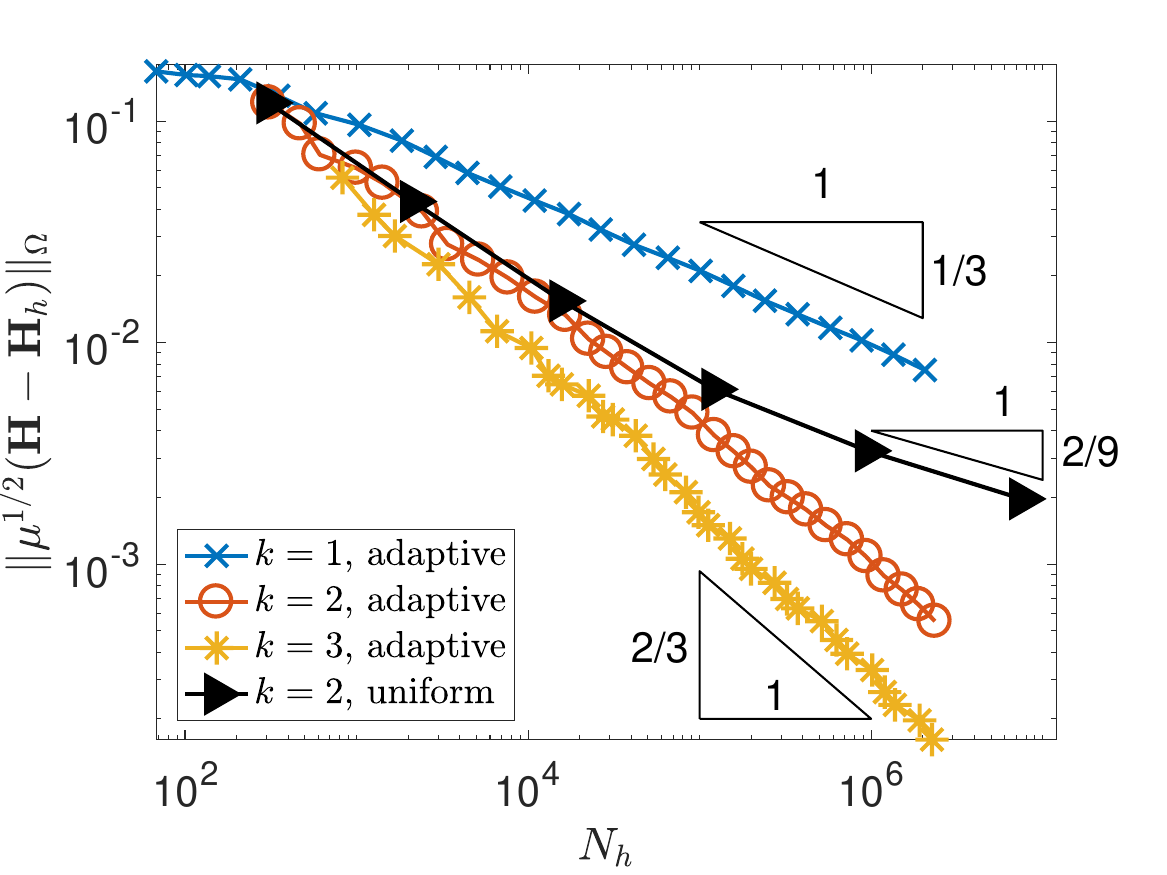}
\includegraphics[width=0.49\textwidth]{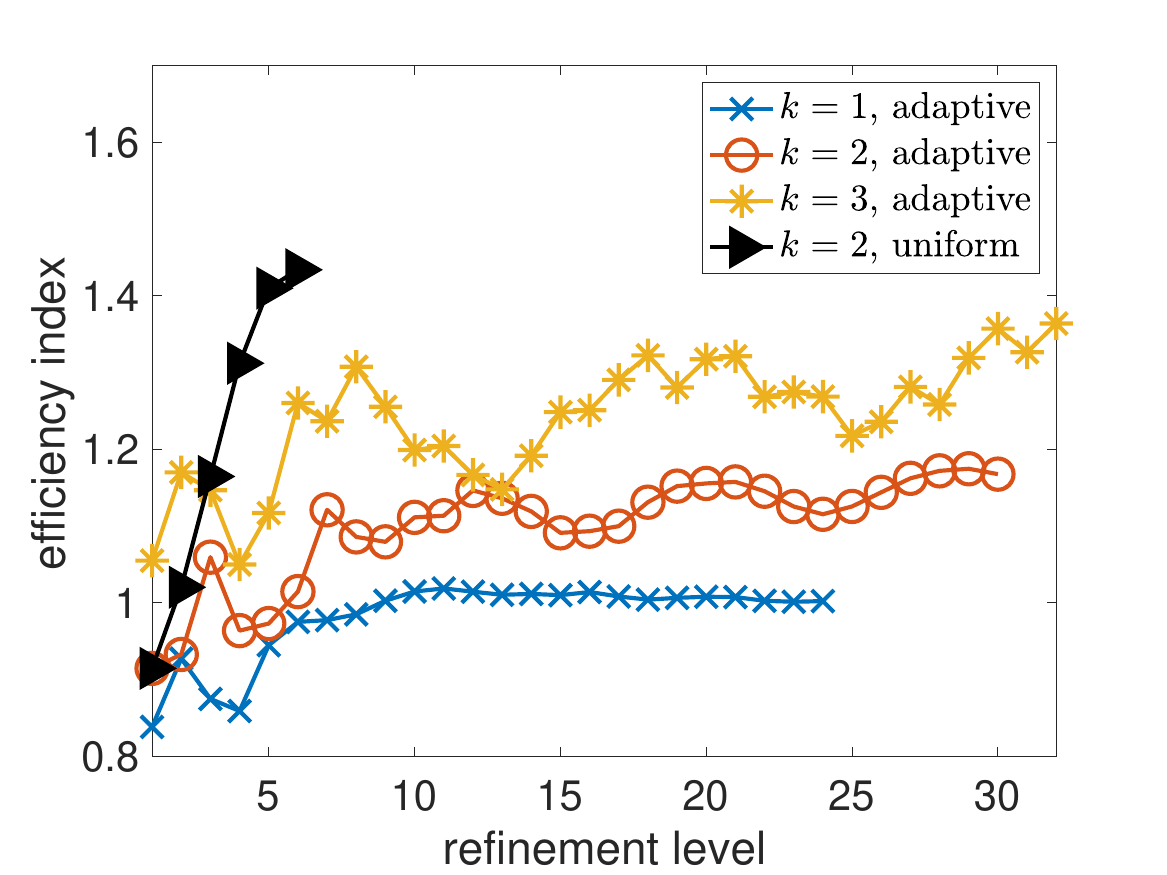}
\caption{Error and efficiency indices for adaptive mesh refinement for the L-brick example.}
\label{fig:L-brick}
\end{figure}
\begin{figure}[t]
\includegraphics[width=0.49\textwidth]{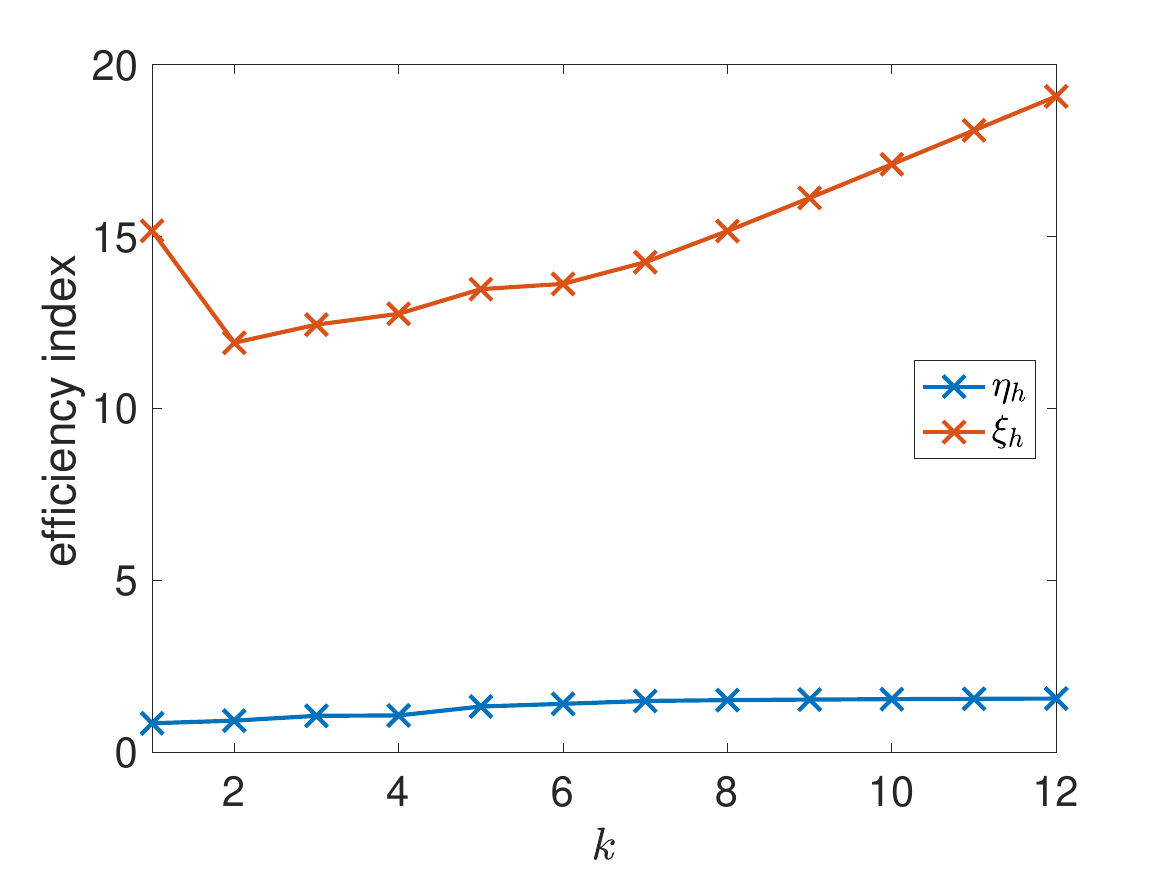}
\includegraphics[width=0.49\textwidth]{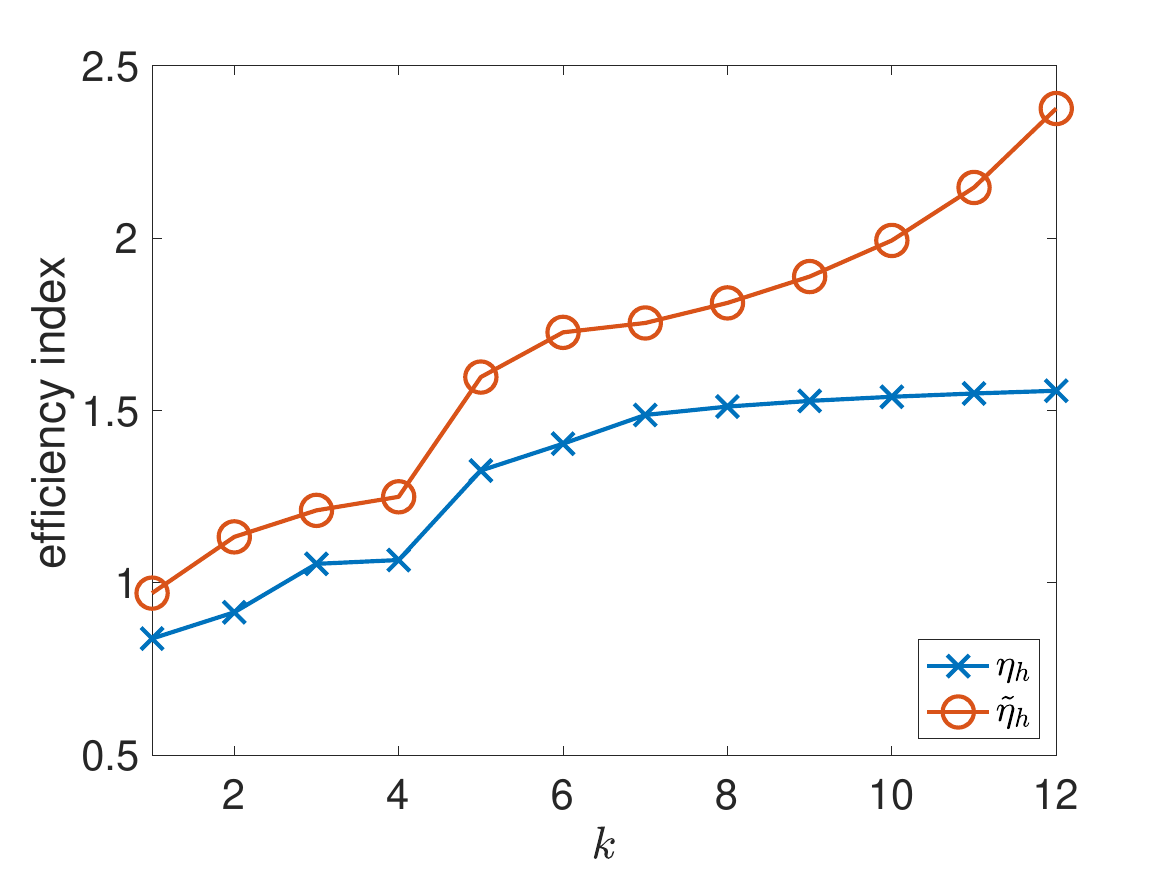}
\caption{Polynomial robustness of the equilibrated {\em a posteriori} error estimator $\eta_h$ in comparison to the residual {\em a posteriori} error estimator $\xi_h$ (left) and the equilibrated estimator $\tilde\eta_h$ (right)
for the L-brick example and a quasi-uniform mesh with 36 elements.}
\label{fig:L-brick:robustness}
\end{figure}
In this example, we consider the homogeneous Maxwell problem
on the (nonconvex) domain
\[
  \Omega = (-1,1)\times(-1,1)\times(0,1)\backslash \left(
    [0,1]\times[-1,0]\times[0,1]\right).
  \]
We choose the right-hand side $\mathbf{j}$ according to the singular solution
\[
\vH=\nabla\times\vu, \quad
\mathbf{u}(x,y,z) = \nabla \times \left( \begin{array}{c} 0\\ 0 \\(1-x^2)^2(1-y^2)^2((1-z)z)^2r^{2/3}\cos(\frac{2}{3}\varphi) \end{array}\right),
\]
where $(r,\varphi)$ are the two dimensional polar coordinates in the $x$-$y$-plane.

In Figure~\ref{fig:L-brick}, we observe suboptimal convergence rates
of asymptotically $\mathcal{O}(N_h^{-2/9})$
for uniform mesh refinement and $k=2$, due to the edge singularity.
For adaptive mesh refinement, we observe improved convergence
rates of $\mathcal{O}({N_h}^{-1/3})$ for $k=1$, close to $\mathcal{O}(({N_h}/\ln({N_h}))^{-2/3})$ for $k=2$, and 
of $\mathcal{O}({N_h}^{-2/3})$ for $k\geq3$, which are in fact the best possible rates one can get with
isotropic mesh refinement, cf. \cite[section 4.2.3]{A1999}.
Again, we observe efficiency indices between 1 and 2.

To investigate the $k$-robustness of the estimator $\eta_h$, 
we compare the efficiency indices for $k$-refinement
of $\eta_h$ to those of $\xi_h$ and $\tilde\eta_h$.
In Figure~\ref{fig:L-brick:robustness}, we observe the same
as for the unit cube example, namely, that the new estimator $\eta_h$
is robust with respect to the polynomial degree $k$, while the
residual estimator $\xi_h$, as well as the equilibrated estimator
$\tilde\eta_h$, is 
not robust in $k$.

\subsection{Example with discontinuous permeability}
\begin{figure}[t]
\includegraphics[width=0.49\textwidth]{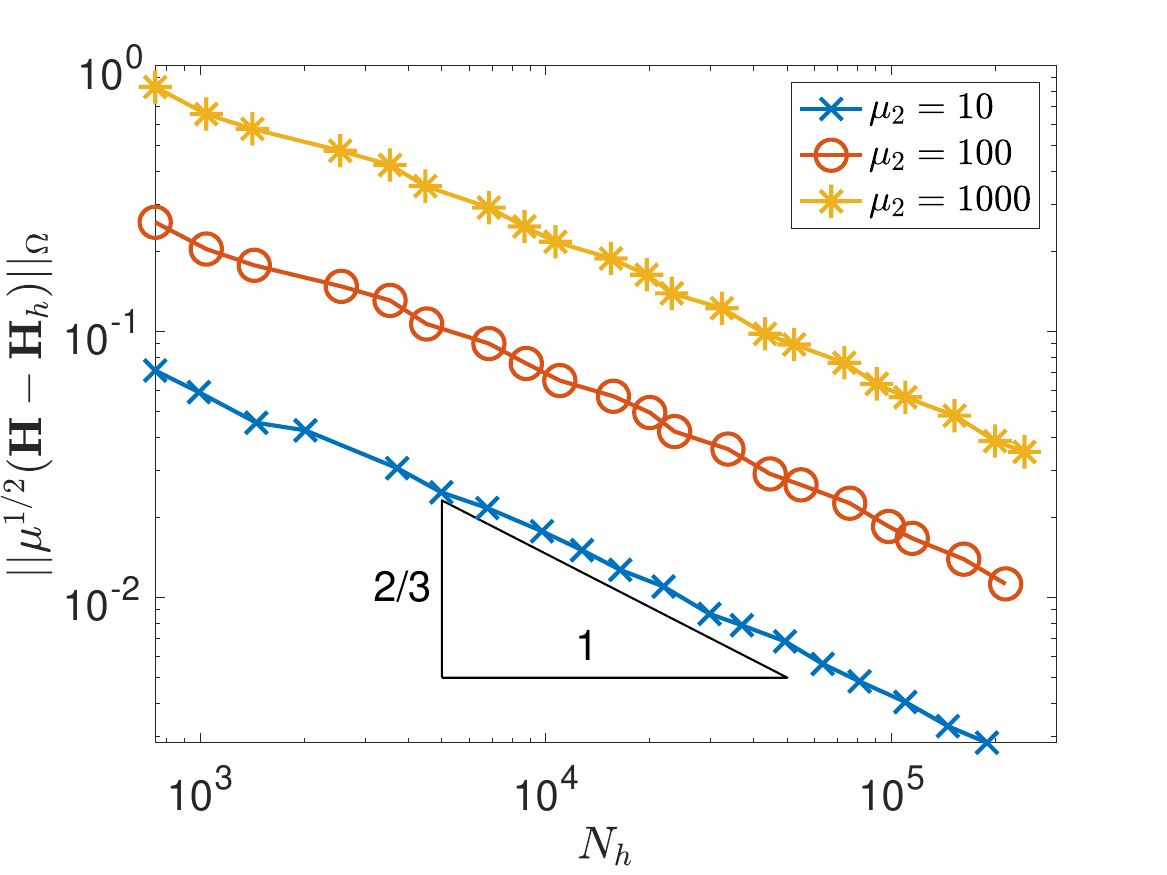}
\includegraphics[width=0.49\textwidth]{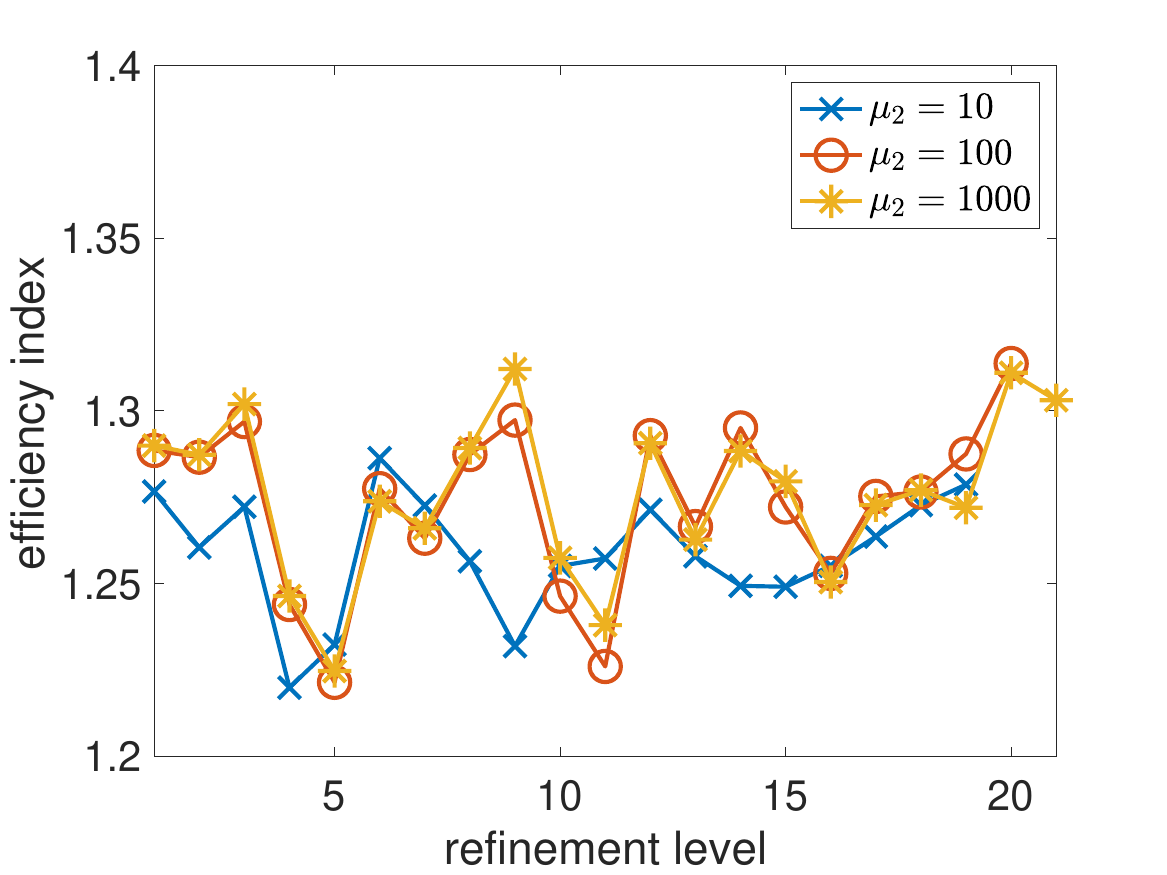}
\caption{Error and efficiency indices for adaptive mesh refinement for the example with discontinuous permeability.}
\label{fig:VarCoeff}
\end{figure}
\begin{figure}[t]
\includegraphics[width=0.49\textwidth]{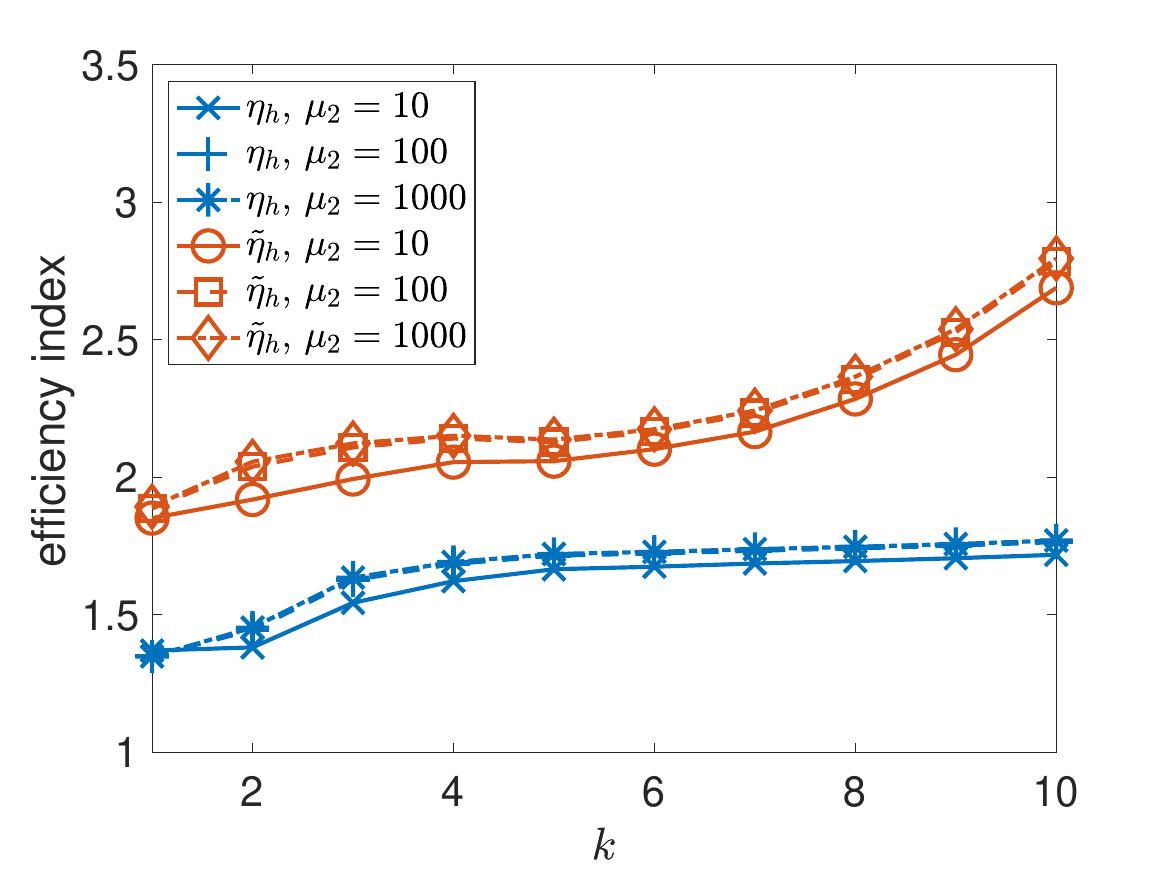}
\caption{Polynomial robustness of the {\em a posteriori} error estimator $\eta_h$ in comparison to the estimator $\tilde\eta_h$ (right)
for the example with a discontinuous permeability and a quasi-uniform mesh with 96 elements.}
\label{fig:VarCoeff:robustness}
\end{figure}
For the last example, we choose a discontinuous permeability 
\[
\mu(x,y,z) = \left\{
\begin{array}{ll}
\mu_1 & \text{if } y<1/2 \text{ and } z < 1/2,\\
\mu_2 & \text{otherwise},
\end{array}
\right.
\]
on the unit cube $\Omega = (0,1)^3$,
and the right hand side $\mathbf{j}=(1,0,0)^t$.
We choose $k=2$, $\mu_1=1$, and vary $\mu_2= 10^\ell$ for $\ell=1,2,3$.
Since the exact solution is unknown, we approximate the error by comparing the numerical approximations
to a reference solution, which is obtained from the last numerical approximation by 8 more adaptive mesh refinements.
In this example, the adaptive algorithm refines strongly along the
edge with endpoints $(0,1/2,1/2)^t$ and $(1,1/2,1/2)^t$, similarly to what is shown in~\cite[Figure~6]{gedicke20}.
The errors in Figure~\ref{fig:VarCoeff} converge with about $\mathcal{O}((N_h/\ln(N_h))^{-2/3})$,
which is optimal for isotropic adaptive mesh refinement,
and the efficiency indices are robust with respect to the contrast of the permeability.

We also compare the efficiency indices of $\eta_h$ to those of $\tilde{\eta}_h$ for different polynomial degrees $k$. To obtain a reference solution, we take the numerical approximation and apply one uniform mesh refinement with respect to $h$. The results are shown in Figure \ref{fig:VarCoeff:robustness}. We observe again that the error estimator $\eta_h$ is robust with respect to the polynomial degree $k$, whereas the estimator $\tilde\eta_k$ grows when increasing $k$.

\section{Conclusions}\label{sec:conclusion}
We have introduced and analyzed an \emph{a posteriori}
 error estimator for arbitrary-degree N\'ed\'elec discretizations of
 the magnetostatic problem based on an equilibration principle.
 This estimator is constructed by adding a localized gradient correction to the estimator
 introduced in~\cite{gedicke20}, and is proven to be reliable
 with reliability constant 1, and uniformly efficient, not only in the
 mesh size, but also in the degree of the polynomial approximation.
 The computation of the new gradient term requires solving local
 problems on vertex patches.
 The polynomial-degree robustness of the
 new estimator has been numerically demonstrated on test problems
 with smooth as well as singular solutions, and for problems with a discontinuous magnetic permeability.

\bibliographystyle{abbrv}
\bibliography{PosterioriError,Maxwell3dNumerics2}

\begin{thebibliography}{10}

\bibitem{A1999}
T.~Apel.
\newblock {\em Anisotropic finite elements: local estimates and applications}.
\newblock Advances in Numerical Mathematics. B. G. Teubner, Stuttgart, 1999.

\bibitem{AMP2000}
D.~N. Arnold, A.~Mukherjee, and L.~Pouly.
\newblock Locally adapted tetrahedral meshes using bisection.
\newblock {\em SIAM J. Sci. Comput.}, 22(2):431--448, 2000.

\bibitem{beck00}
R.~Beck, R.~Hiptmair, R.~H.~W. Hoppe, and B.~Wohlmuth.
\newblock Residual based a posteriori error estimators for eddy current
  computation.
\newblock {\em ESAIM: Mathematical Modelling and Numerical Analysis},
  34(1):159--182, 2000.

\bibitem{beck99}
R.~Beck, R.~Hiptmair, and B.~Wohlmuth.
\newblock Hierarchical error estimator for eddy current computation.
\newblock {\em Numerical mathematics and advanced applications
  (Jyv{\"a}skyl{\"a}, 1999)}, pages 110--120, 1999.

\bibitem{braess09}
D.~Braess, V.~Pillwein, and J.~Sch{\"o}berl.
\newblock Equilibrated residual error estimates are p-robust.
\newblock {\em Computer Methods in Applied Mechanics and Engineering},
  198(13-14):1189--1197, 2009.

\bibitem{braess08}
D.~Braess and J.~Sch{\"o}berl.
\newblock Equilibrated residual error estimator for edge elements.
\newblock {\em Mathematics of Computation}, 77(262):651--672, 2008.

\bibitem{brenner03}
S.~C. Brenner.
\newblock {Poincar{\'e}--Friedrichs inequalities for piecewise H1 functions}.
\newblock {\em SIAM Journal on Numerical Analysis}, 41(1):306--324, 2003.

\bibitem{chaumont_frelet20arXiv}
T.~Chaumont-Frelet, A.~Ern, and M.~Vohral{\'\i}k.
\newblock Stable broken {H} (curl) polynomial extensions and p-robust
  quasi-equilibrated a posteriori estimators for {M}axwell's equations.
\newblock {\em arXiv preprint arXiv:2005.14537}, 2020.

\bibitem{costabel10}
M.~Costabel and A.~McIntosh.
\newblock On {B}ogovski{\u \i} and regularized {P}oincar{\'e} integral
  operators for de {R}ham complexes on {L}ipschitz domains.
\newblock {\em Mathematische Zeitschrift}, 265(2):297--320, 2010.

\bibitem{creuse19a}
E.~Creus\'{e}, P.~Dular, and S.~Nicaise.
\newblock About the gauge conditions arising in finite element magnetostatic
  problems.
\newblock {\em Comput. Math. Appl.}, 77(6):1563--1582, 2019.

\bibitem{creuse19b}
E.~Creus{\'e}, Y.~Le~Menach, S.~Nicaise, F.~Piriou, and R.~Tittarelli.
\newblock Two guaranteed equilibrated error estimators for harmonic
  formulations in eddy current problems.
\newblock {\em Computers \& Mathematics with Applications}, 77(6):1549--1562,
  2019.

\bibitem{creuse17}
E.~Creus{\'e}, S.~Nicaise, and R.~Tittarelli.
\newblock A guaranteed equilibrated error estimator for the
  $\mathbf{A}$-$\varphi$ and $\mathbf{T}$-$\omega$ magnetodynamic harmonic
  formulations of the {M}axwell system.
\newblock {\em IMA Journal of Numerical Analysis}, 37(2):750--773, 2017.

\bibitem{Doerfler1996}
W.~D\"{o}rfler.
\newblock A convergent adaptive algorithm for {P}oisson's equation.
\newblock {\em SIAM J. Numer. Anal.}, 33(3):1106--1124, 1996.

\bibitem{ern15}
A.~Ern and M.~Vohral\'{\i}k.
\newblock Polynomial-degree-robust a posteriori estimates in a unified setting
  for conforming, nonconforming, discontinuous {G}alerkin, and mixed
  discretizations.
\newblock {\em SIAM J. Numer. Anal.}, 53(2):1058--1081, 2015.

\bibitem{ern19}
A.~Ern and M.~Vohral{\'\i}k.
\newblock Stable broken {H}$^1$ and {H}(div) polynomial extensions for
  polynomial-degree-robust potential and flux reconstruction in three space
  dimensions.
\newblock {\em Mathematics of Computation}, 2019.

\bibitem{gedicke20}
J.~Gedicke, S.~Geevers, and I.~Perugia.
\newblock An equilibrated a posteriori error estimator for arbitrary-order
  {N}\'ed\'elec elements for magnetostatic problems.
\newblock {\em Journal of Scientific Computing}, 83:1--23, 2020.

\bibitem{gopalakrishnan04}
J.~Gopalakrishnan and L.~F. Demkowicz.
\newblock Quasioptimality of some spectral mixed methods.
\newblock {\em Journal of Computational and Applied Mathematics},
  167(1):163--182, 2004.

\bibitem{H1999}
R.~Hiptmair.
\newblock Multigrid method for {M}axwell's equations.
\newblock {\em SIAM J. Numer. Anal.}, 36(1):204--225, 1999.

\bibitem{hiptmair02}
R.~Hiptmair.
\newblock Finite elements in computational electromagnetism.
\newblock {\em Acta Numerica}, 11:237--339, 2002.

\bibitem{kikuchi89}
F.~Kikuchi.
\newblock Mixed formulations for finite element analysis of magnetostatic and
  electrostatic problems.
\newblock {\em Japan Journal of Applied Mathematics}, 6(2):209--221, 1989.

\bibitem{monk98}
P.~Monk.
\newblock A posteriori error indicators for {M}axwell's equations.
\newblock {\em Journal of Computational and Applied Mathematics},
  100(2):173--190, 1998.

\bibitem{neittaanmaki10}
P.~Neittaanm{\"a}ki and S.~Repin.
\newblock Guaranteed error bounds for conforming approximations of a {M}axwell
  type problem.
\newblock In {\em Applied and Numerical Partial Differential Equations}, pages
  199--211. Springer, 2010.

\bibitem{nicaise05}
S.~Nicaise.
\newblock {On Zienkiewicz--Zhu error estimators for Maxwell's equations}.
\newblock {\em Comptes Rendus Mathematique}, 340(9):697--702, 2005.

\bibitem{spivak65}
M.~Spivak.
\newblock {\em {C}alculus on {M}anifolds}.
\newblock Addison--Wesley, 1965.

\bibitem{tang13}
Z.~Tang, Y.~Le~Menach, E.~Creus{\'e}, S.~Nicaise, F.~Piriou, and N.~Nemitz.
\newblock Residual and equilibrated error estimators for magnetostatic problems
  solved by finite element method.
\newblock {\em IEEE Transactions on Magnetics}, 49(12):5715--5723, 2013.

\end{thebibliography}

\appendix
\section{Proof of Proposition~\ref{prp:phiProjErr}}\label{appA}
Let $u_0\in P_0^{-1}(\Tnu)$ denote the $L^2$ projection of $u$ onto $P_0^{-1}(\Tnu)$, let $u_{0,T}$, for each $T\in\Tnu$, denote the value of $u_0$ at $T$, and let $\lambda_{0,f}:=u_{0,T^+}-u_{0,T^-}$ for each $f\subset\omega_\nu$. Note that $u_{0,T}-\overline{u}^{\omega_\nu}=0$ for all $T\in\Tnu$ when $\lambda_{0,f}=0$ for all $f\subset\omega_\nu$. We therefore have 
\begin{align*}
\left( \sum_{T\in\Tnu} \left(u_{0,T}-\overline{u}^{\omega_\nu}\right)^2 \right)^{1/2} &\leq \left( \sum_{f\subset\omega_\nu}\lambda_{0,f}^2 \right)^{1/2},
\end{align*}
where $C$ is some positive constant that only depends on the configuration of the element patch. Since the number of possible configurations is finite and depends on the mesh regularity, the constant $C$ only depends on the mesh regularity. From this inequality, we can obtain
\begin{align}
\label{eq:phiProjErr1a}
\|u_{0}-\overline{u}^{\omega_\nu}\|_{\omega_\nu} &\leq C\sum_{f\subset\omega_\nu} h_f^{1/2}\left\|{\ju{u_0}} \right\|_f.
\end{align}
We can also derive the following:
\begin{align*}
\|\ju{u_0}\|_f^2 &= (\ju{u_0},\ju{u_0})_f = (\ju{u_0-u},\ju{u_0})_f \leq \|\ju{u_0-u}\|_f\|\ju{u_0}\|_f
\end{align*}
for every $f\subset\omega_\nu$, where the second identity follows from the property $(\ju{u},1)_f=0$ and the third identity follows from the Cauchy--Schwarz inequality. From this, we obtain
\begin{align}
\label{eq:phiProjErr1b}
\|\ju{u_0}\|_f \leq \|\ju{u_0-u}\|_f \leq C\left(h_{T^+}^{1/2}\|\nabla u\|_{T^+} + h_{T^-}^{1/2}\|\nabla u\|_{T^-}\right) 
\end{align}
for all $f\subset\omega_\nu$, where the second inequality follows from standard interpolation theory. From interpolation theory, it also follows that
\begin{align}
\label{eq:phiProjErr1c}
\|u-u_0\|_T &\leq Ch_T\|\nabla u\|_T
\end{align}
The proposition then follows immediately from the triangle inequality, \eqref{eq:phiProjErr1a}, \eqref{eq:phiProjErr1b}, and \eqref{eq:phiProjErr1c}.

\end{document}